 \newtheorem{thm}{Theorem}
 \newtheorem{lem}[thm]{Lemma}
 \theoremstyle{definition}
 \theoremstyle{remark}
 \numberwithin{equation}{section}
\newcommand{\N}{{\mathbb N}}
\newcommand{\supp}{\operatorname{supp}}
\begin{document}

%
%
%
%
%
%
%
%
%

\title[Note on the resonance method for the Riemann zeta function]{Note on the resonance method \\ for the Riemann zeta function}

\author[Andriy Bondarenko]{Andriy Bondarenko}

\address{%
Department of Mathematical Sciences \\ Norwegian University of Science and Technology \\ NO-7491 Trondheim \\ Norway}

\email{andriybond@gmail.com}

\author[Kristian Seip]{Kristian Seip}
\address{Department of Mathematical Sciences \\ Norwegian University of Science and Technology \\ NO-7491 Trondheim \\ Norway}
\email{seip@math.ntnu.no}
\thanks{Research supported in part by Grant 227768 of the Research Council of Norway. }


\subjclass{11M06, 11C20}


\dedicatory{To the memory of Victor Havin}

\begin{abstract}
We improve Montgomery's $\Omega$-results for $|\zeta(\sigma+it)|$ in the strip $1/2<\sigma<1$ and give in particular lower bounds for the maximum of $|\zeta(\sigma+it)|$ on $\sqrt{T}\le t \le T$ that are uniform in $\sigma$.
We give similar lower bounds for the maximum of $|\sum_{n\le x} n^{-1/2-it}|$ on intervals of length much larger than $x$.  We rely on our recent work on lower bounds for maxima of $|\zeta(1/2+it)|$ on long intervals, as well as work of Soundararajan, G\'{a}l, and others. The paper aims at displaying and clarifying the conceptually different combinatorial arguments that show up in various parts of the proofs. \end{abstract}

\maketitle
\section{Introduction}
Soundararajan \cite{S}  and Hilberdink \cite{Hi} presented independently slightly different versions of a technique, known as the resonance method, for detecting large values of the Riemann zeta function $\zeta(s)$. In our recent paper \cite{BS2}, we used Soundararajan's version of this method and the construction of a special multiplicative function to show that
\begin{equation} \label{maxduke} \max_{\sqrt{T}\le t \le T} \left|\zeta\Big(\frac{1}{2}+it\Big)\right| \ge \exp\left(\Big(\frac{1}{\sqrt{2}}+o(1)\Big)\sqrt{\frac{\log T \log\log\log T}{\log\log T}}\right)\end{equation}
when $T\to \infty.$
This gave an improvement by a power of  $\sqrt{\log\log\log T}$ compared with previously known estimates \cite{BR,S}.

In this note, we will apply the resonance method to two closely related problems, namely to find large values of respectively $\zeta(\sigma+it)$ for $1/2<\sigma<1$ and the partial sum $\sum_{n\le M}n^{-1/2-it}$ on certain long intervals (depending on $M$). We will find uniform lower bounds on the maximum  in the strip $1/2<\sigma<1$ and show in particular that the bound on the right-hand side of \eqref{maxduke} (with $1/\sqrt{2}$ replaced by a different constant) holds as far as $1/\log\log T$ to the right of the critical line.

Before proceeding to the details of these new results, we would like to comment on the relation between our subject and Hardy spaces, the presumed topic of the present volume. As outlined in \cite{SS},
our construction of resonators originates in Bohr's several complex variables perspective of Dirichlet series and our recent work on Hardy spaces of Dirichlet series.  The present paper can thus be viewed as an outgrowth of the remarkably rich subject of Hardy spaces and more specifically of a branch of it that interacts with number theory. Moreover, one may interpret our theorem on partial sums (Theorem~\ref{part} below) as dealing with a well known type of problem in complex analysis, namely how small the maximal size of an analytic function can be on a set of uniqueness that in some sense is ``small''.  For further information about Hardy spaces of Dirichlet series and connections with number theory, we refer to the survey paper \cite{SS} and the monograph \cite{QQ}.

\section{Statement of main results}
A less precise version of the following result was stated without proof in \cite{BS2}.

\begin{thm} \label{extreme}
There exists a positive and continuous function $\nu(\sigma)$ on  $(1/2,1)$, bounded below by $1/(2-2\sigma)$, with the asymptotic behavior
 \[ \nu(\sigma)=\begin{cases} (1-\sigma)^{-1}+O(|\log(1-\sigma)|), & \sigma \nearrow 1 \\
(1/\sqrt{2}+o(1))\sqrt{|\log(2\sigma-1)|}, & \sigma\searrow 1/2, \end{cases} \]
and such that the following holds.
If  $T$ is sufficiently large, then for $1/2+1/\log\log T \le \sigma \le 3/4$,
\begin{equation}\label{intermediate1} \max_{t\in[\sqrt{T},T]} \left|\zeta\Big(\sigma+it\Big)\right| \ge \exp\left(\nu(\sigma)\frac{(\log T)^{1-\sigma}}{(\log\log T)^\sigma}\right) \end{equation}
and for $3/4\le \sigma \le 1-1/\log\log T$,
\begin{equation}\label{intermediate2} \max_{t\in[T/2,T]} \left|\zeta\Big(\sigma+it\Big)\right| \ge \log\log T\exp\left(c+\nu(\sigma)\frac{(\log T)^{1-\sigma}}{(\log\log T)^\sigma}\right), \end{equation}
with $c$ an absolute constant independent of $T$.
 \end{thm}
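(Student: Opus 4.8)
The plan is to run Soundararajan's resonance method with the same multiplicative resonator coefficients that we constructed in \cite{BS2}, but to optimize the construction at the abscissa $\sigma$ rather than at $1/2$. Recall that the method produces, for a resonator $R(t)=\sum_{n\in\mathcal M}r(n)n^{-it}$ with $r(n)\ge 0$ supported on a carefully chosen finite set $\mathcal M$ of integers, a lower bound of the shape
\[
\max_{t\in[T/2,T]}\left|\zeta(\sigma+it)\right|\ \gg\ \frac{\sum_{m,n\in\mathcal M}\frac{r(m)r(n)}{[m,n]^{\sigma}}\left(\frac{(m,n)^{2}}{mn}\right)^{\sigma-1/2+o(1)}}{\sum_{m,n\in\mathcal M}r(m)r(n)\left(\frac{(m,n)^{2}}{mn}\right)^{?}}\, ,
\]
or more robustly, after reorganizing as in \cite{S}, a quotient in which the numerator is essentially $\sum_{n\le N}\frac{1}{n^{\sigma}}\big(\sum_{n\mid k,\, k\in\mathcal M}r(k)\big)^{2}$ weighted by divisor-type factors. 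The key input we exploit is the multiplicative structure: choosing $r$ multiplicative with $r(p^{k})$ decreasing, the GCD-sum ratio is controlled by an Euler product, and G\'al-type estimates (as used by Soundararajan and later in \cite{BS2}) let us evaluate it. First I would set $r(p)=\big(1+\tfrac{1}{p^{1-\sigma}}\log(1-\sigma)\big)$-type weights on primes $p$ in a window, tuned so that the resulting Euler product $\prod_{p}\big(1+\frac{r(p)}{p^{\sigma}}+\cdots\big)\big/\prod_{p}\big(1+r(p)^{2}+\cdots\big)^{1/2}$ is maximized; the exponent of $\log T$ that falls out is $1-\sigma$ and the exponent of $\log\log T$ is $\sigma$, with the constant in front being precisely the function $\nu(\sigma)$ we define as the supremum of the normalized Euler product.

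The asymptotic behavior of $\nu(\sigma)$ is obtained by analyzing this extremal Euler product in the two limits. As $\sigma\searrow 1/2$, the optimization collapses to the same variational problem solved in \cite{BS2}, producing the $(1/\sqrt2+o(1))\sqrt{|\log(2\sigma-1)|}$ behavior — this is essentially continuity of the construction in $\sigma$ near the critical line, with $|\log(2\sigma-1)|$ playing the role that $\log\log\log T$ played in \eqref{maxduke}. As $\sigma\nearrow 1$, the Euler product is dominated by the contribution of the smallest primes and one gets $\nu(\sigma)\sim(1-\sigma)^{-1}$, the secondary term $O(|\log(1-\sigma)|)$ coming from the transition region of primes $p\asymp\exp\big(c(1-\sigma)^{-1}\big)$; here one can cross-check against the classical fact that $\sum_{p\le x}p^{-\sigma}\sim x^{1-\sigma}/((1-\sigma)\log x)$. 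The lower bound $\nu(\sigma)\ge 1/(2-2\sigma)$ is the cheap bound: take the trivial single-term resonator, or rather use $r$ supported on prime powers of a single small prime, which already gives an Euler-factor ratio bounded below by a quantity behaving like $(\log T)^{1-\sigma}/((2-2\sigma)(\log\log T)^{\sigma})$ in the exponent. For \eqref{intermediate2} in the range $\sigma\ge 3/4$ the extra factor $\log\log T\cdot e^{c}$ comes from being able to afford a slightly longer resonator (length up to $T$ rather than $\sqrt T$) together with a more careful lower estimate of the Euler product tail, which is legitimate precisely because when $\sigma$ is bounded away from $1/2$ the diagonal terms in the GCD sum no longer dominate.

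The main technical obstacle is achieving \textbf{uniformity in $\sigma$} across the whole range $1/2+1/\log\log T\le\sigma\le 1-1/\log\log T$: the resonator length, the prime window, and the error terms in the G\'al-type evaluation of the bilinear form all depend on $\sigma$, and one must check that the $o(1)$'s in $\nu(\sigma)$ and in the exponent can be taken uniformly once $\sigma$ stays $1/\log\log T$ away from each endpoint. Concretely, the endpoint $\sigma=1/2+1/\log\log T$ is delicate because there the resonator essentially degenerates to the critical-line construction and one needs the second-order terms (the $\log\log\log T$-type gain) to survive; the endpoint $\sigma=1-1/\log\log T$ is delicate because $\nu(\sigma)\sim\log\log T$ there, so the bound \eqref{intermediate2} must be shown to still beat the trivial bound $|\zeta(\sigma+it)|\gg 1$, which forces the careful bookkeeping that produces the honest $\log\log T$ prefactor and the absolute constant $c$. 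The splitting at $\sigma=3/4$ is an artifact of which of the two mechanisms (diagonal-dominated versus Euler-product-tail-dominated) is cleaner to write down, and I would handle the two pieces with the two slightly different forms of the method — Soundararajan's for the left piece, a Hilberdink/\cite{BS2}-style refinement for the right piece — rather than trying to force a single uniform argument through the crossover.
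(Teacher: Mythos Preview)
Your outline has the two variants of the resonance method assigned to the \emph{wrong} halves of the strip, and this is not a cosmetic slip: it is exactly the point of the theorem.

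In the paper, Soundararajan's original setup (mollifier $\Psi$ supported in $[1/2,1]$, constraint $\max\mathcal N\le T^{1-\varepsilon}$) is used for $3/4\le\sigma\le 1-1/\log_2 T$, with the G\'al-type resonator $r(n)=\mathbf 1_{n\mid K}$ for $K=\prod_{p\le x}p^{\ell-1}$, $x=(\log T)/(2\log_2 T)$, $\ell=[\log_2 T]$. The modified method from \cite{BS2} (Gaussian mollifier, interval $[\sqrt T,T]$, constraint $|\mathcal M|\le N$ instead of $\max\mathcal M\le N$) is used for $1/2+1/\log_2 T\le\sigma\le 3/4$, with a genuinely multiplicative $f(p)$ supported on primes in a window above $\log N\log_2 N$. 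You have this reversed. The reason it must be this way is that the \cite{BS2} machinery costs you fixed powers of $\log T$ (Lemmas~\ref{m1lemma} and~\ref{m2lem}), which are absorbed harmlessly into the exponent when $\sigma$ is bounded away from $1$ but would swamp the bound near $\sigma=1$, where the target is only of size $\log_2 T$. That is the actual reason for the split at $3/4$, not a ``diagonal versus tail'' dichotomy.

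Consequently your explanation of the prefactor $\log_2 T$ in \eqref{intermediate2} is also wrong. It does not come from ``affording a longer resonator''; it comes from the term $\sigma\log_2 x$ in the estimate
\[
\sum_{p\le x}p^{-\sigma}\ \ge\ \sigma\log_2 x + C + \frac{x^{1-\sigma}}{(1-\sigma)\log x}
\]
(Lemma~\ref{psum}), which in turn produces the factor $(\log_2 T)^\sigma$ outside the exponential once $x\asymp(\log T)/\log_2 T$. Likewise, the uniform lower bound $\nu(\sigma)\ge 1/(2-2\sigma)$ is not obtained from a single-prime resonator but simply from the inequality $2^{\sigma-1}>1/2$ applied to the coefficient $2^{\sigma-1}/(1-\sigma)$ that falls out of the G\'al construction. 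Finally, the first displayed formula in your proposal is not well-formed (it literally contains a ``?''), and the description of the weights as ``$(1+p^{-(1-\sigma)}\log(1-\sigma))$-type'' does not match either construction; in the range near $1/2$ the paper takes
\[
f(p)=\frac{(\log N\log_2 N)^{1-\sigma}}{\sqrt{|\log(2\sigma-1)|}}\cdot\frac{1}{p^{1-\sigma}(\log p-\log_2 N-\log_3 N)}
\]
on a prime window of multiplicative width $\exp((2\sigma-1)^{-\alpha})$, and it is this specific choice that yields the $(1/\sqrt 2+o(1))\sqrt{|\log(2\sigma-1)|}$ asymptotic via Lemma~\ref{lem1}.
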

To place this result in context, we recall Levinson's classical estimate\footnote{This result was later improved by Granville and Soundararajan \cite{GS} who managed to add a positive term of size $\log\log\log T$ on the right-hand side of  \eqref{Norman}.} \cite{L}
\begin{equation}\label{Norman} \max_{1\le t \le T} |\zeta(1+it)|\ge e^{\gamma}\log\log T + O(1), \end{equation}
where $\gamma$ is the Euler--Mascheroni constant. We now observe that Theorem~\ref{extreme} gives a ``smooth'' transition between the two endpoint cases \eqref{Norman} and \eqref{maxduke}. The factor $\log\log T$ on the right-hand side of \eqref{intermediate2} is only needed for $\sigma$ close to the right endpoint
$1-1/\log\log T$, to get the transition to Levinson's estimate. Theorem~\ref{extreme} gives a notable improvement of a classical estimate of Montgomery \cite{M} for the range $1/2<\sigma <1$. See \cite{RS} and the discussion in \cite{BS2} for the best estimates known previously.

The choice of intermediate abscissa $\sigma=3/4$ is somewhat
arbitrary (any fixed $\sigma_0$, $1/2<\sigma_0<1$ would do), and we could have shortened the interval in \eqref{intermediate1} (depending on $\sigma$). Indeed, the precise statement of Theorem~\ref{extreme} is a tradeoff between conveying the main point of the transition between the two endpoint cases and keeping the technicalities  reasonably simple.

We have refrained from making a precise statement about sharp estimates in the short intervals
$[1/2, 1/2+1/ \log\log T]$ and $[1-1/\log\log T, 1]$, although our method would certainly allow us to do it. The main point is that the order of magnitude of the respective endpoint estimates persists in these intervals. It may seem surprising that these intervals are as long as $1/\log\log T$ on either side. We will see in the course of the proof that this can be attributed to the resonance method's selection of smooth numbers\footnote{The smoothness (sometimes called the friability) of a positive integer $n$ is measured by the largest prime $p$ dividing $n$. The smaller this prime is, the smoother the number is.} in the construction of resonating Dirichlet polynomials.

In our proof of Theorem~\ref{extreme}, we use the approximate formula
\begin{equation}\label{approx} \zeta(\sigma+i t)= \sum_{n\le x} n^{-\sigma-it} - \frac{x^{1-\sigma-it}}{1-\sigma-it}+O(x^{-\sigma}), \end{equation}
which holds uniformly in the range $\sigma\ge \sigma_0>0$, $|t|\le x$ (see \cite[Theorem 4.11]{T}). This means that detecting large values of $\zeta(\sigma+i t)$ for $1/2\le \sigma \le 1$ and $|t|\le T$ is mainly a question about finding large values of the Dirichlet polynomial $\sum_{n\le T} n^{-\sigma-it}$ for $|t|\le T$.

We find it to be of interest to see what we get when we look for large values of just the partial sum itself on longer intervals. Thus we remove the a priori restriction on the length of the interval forced upon us by the approximate formula  \eqref{approx}.  We will only consider $\sigma=1/2$ and introduce the notation
\[
D_M(t)=\sum_{n\le M}n^{-1/2-it}.
\]
\begin{thm}\label{part}
Suppose that $c$, $0<c<1/2$, is given. If $T$ is sufficiently large and $M\ge \exp\big(e\sqrt{\log T \log\log T \log\log\log  T/2}\big)$,
then
\[
\max_{t\in[\sqrt{T},T]}|D_M(t)|\ge \exp\left(c\sqrt{\frac{\log T \log\log\log T}{\log\log T}}\right).
\]
\end{thm}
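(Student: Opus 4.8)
The plan is to run the resonance method with the partial sum $D_M$ in the role usually played by $\zeta(1/2+it)$. Fix $c$ with $0<c<1/2$. Choose a finite set $\mathcal M$ of positive integers, positive coefficients $r(n)$ for $n\in\mathcal M$, and a nonnegative $C^\infty$ weight $\Phi$ supported on $[\sqrt T,T]$ whose Fourier transform obeys $\widehat\Phi(\xi)\ll_A\widehat\Phi(0)(1+T|\xi|)^{-A}$ for every $A$; set $R(t)=\sum_{n\in\mathcal M}r(n)n^{it}$. Since $\Phi\ge 0$,
\[
\max_{t\in[\sqrt T,T]}\bigl|D_M(t)\bigr|\ \ge\ \frac{\bigl|\int_{\R}D_M(t)\,|R(t)|^2\,\Phi(t)\,dt\bigr|}{\int_{\R}|R(t)|^2\,\Phi(t)\,dt}.
\]
Expanding the numerator, the term $m^{-1/2-it}$ of $D_M$ paired with $r(n_1)r(n_2)(n_1/n_2)^{it}$ from $|R|^2$ contributes $m^{-1/2}r(n_1)r(n_2)\,\widehat\Phi\bigl(\tfrac1{2\pi}\log(mn_2/n_1)\bigr)$. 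The diagonal part, where $mn_2=n_1$ (so $n_2\mid n_1$ and $m=n_1/n_2$), equals $\widehat\Phi(0)\sum_{n_2\mid n_1}\bigl(\gcd(n_1,n_2)/\sqrt{n_1n_2}\bigr)r(n_1)r(n_2)$, while the denominator is $\widehat\Phi(0)\sum_n r(n)^2$ up to a negligible off-diagonal. Hence everything reduces to choosing $\mathcal M$, $r$ and $\Phi$ so that the diagonal ratio is at least $\exp\bigl(c\sqrt{\tfrac{\log T\log\log\log T}{\log\log T}}\bigr)$ and the off-diagonal is dominated.

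For the resonator I would take a variant of the one built in \cite{BS2}: as there, $\mathcal M$ consists of products of primes from a carefully chosen range, with the number of prime factors of each element confined to a suitable interval, $r$ is the associated (essentially multiplicative) weight, and $\max\mathcal M$ is kept below a fixed power of $T$. The new feature is that one also insists that every ratio $n_1/n_2$ with $n_2\mid n_1$ and $n_1,n_2\in\mathcal M$ be at most $M_0:=\exp\bigl(e\sqrt{\log T\log\log T\log\log\log T/2}\bigr)$. This is what makes the hypothesis $M\ge M_0$ bite: under it the constraint $m=n_1/n_2\le M$ discards no term of the diagonal, so the diagonal sum is precisely the greatest-common-divisor sum that drives the resonance method for $\zeta(1/2+it)$ itself. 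By the estimates of \cite{BS2}, which build on work of Soundararajan, G\'al and others, $\mathcal M$ and $r$ can be arranged so that this sum exceeds $\exp\bigl((c+o(1))\sqrt{\tfrac{\log T\log\log\log T}{\log\log T}}\bigr)\sum_n r(n)^2$. The reason the ceiling here is $1/2$ rather than the $1/\sqrt2$ of \eqref{maxduke} is that the extra constraint costs something: heuristically, forcing all divisor-quotients below $M_0$ roughly halves the logarithmic size of $\mathcal M$ one can afford, hence the factor $\sqrt2$ in the exponent.

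Controlling the off-diagonal is then a routine matter, and at this step $D_M$ is in fact better behaved than $\zeta$. Consider a term with $mn_2\ne n_1$. If $mn_2\le 2\max\mathcal M$, then $mn_2$ and $n_1$ are distinct positive integers of size at most $2\max\mathcal M$, whence $|\log(mn_2/n_1)|\gg1/\max\mathcal M\gg 1/T$; if instead $mn_2>2\max\mathcal M\ge 2n_1$, then $mn_2/n_1>2$ and $|\log(mn_2/n_1)|\gg 1$. In either case $|\widehat\Phi(\tfrac1{2\pi}\log(mn_2/n_1))|\ll_A\widehat\Phi(0)(T/\max\mathcal M)^{-A}$, so summing crudely over $m\le M$ and $n_1,n_2\in\mathcal M$ bounds the off-diagonal by $O_A\bigl(\widehat\Phi(0)(T/\max\mathcal M)^{-A}M^{1/2}(\sum_n r(n))^2\bigr)$; choosing $A$ large, this is dominated by the diagonal whenever $M$ lies below an arbitrary but fixed power of $T$. (No approximate functional equation and no $t$-dependent truncation appear here, since $D_M$ is an honest Dirichlet polynomial.) The remaining case, $M$ larger than $T^3$, say, is immediate from \eqref{approx} with $x=M$: then $D_M(t)=\zeta(1/2+it)+M^{1/2-it}/(1/2-it)+O(M^{-1/2})$ for $t\in[\sqrt T,T]$, and at any $t\in[\sqrt T,2\sqrt T]$ where $|\zeta(1/2+it)|\le(\log T)^{10}$ — such $t$ being abundant by the second moment of $\zeta$ — the explicit term forces $|D_M(t)|\gg M^{1/2}/\sqrt T\gg T^{1/2}$, far above the claimed bound. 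The two ranges together cover all $M\ge M_0$.

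The genuine difficulty is the construction of the resonator in the second step: one must produce, for each admissible $M$, a set $\mathcal M$ whose greatest-common-divisor sum is as large as asserted while simultaneously keeping $\max\mathcal M$ a fixed power of $T$ (so the off-diagonal vanishes) and --- the new demand --- keeping every divisor-quotient below $M_0$. This is a combinatorial optimization of G\'al type, delicate because the three requirements pull against each other; the precise form of $M_0$ and the constant $1/2$ are exactly what it yields. Everything else is bookkeeping.
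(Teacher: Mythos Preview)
The argument has a real gap at the off-diagonal step. You assume $\max\mathcal{M}$ is at most a fixed power of $T$ and then use $|\log(mn_2/n_1)|\gg 1/\max\mathcal{M}\gg T^{-1}$ to kill the non-diagonal terms. But the resonator from \cite{BS2} that produces the factor $\sqrt{\log_3 T}$ does \emph{not} satisfy $\max\mathcal{M}\le T^{O(1)}$: only the cardinality $|\mathcal{M}|$ is kept $\le\sqrt T$, while individual elements---products of order $\log T/\log_3 T$ primes each of size $\asymp\log T\log_2 T$---can exceed any fixed power of $T$. If you instead force $\max\mathcal{M}\le T^{1-\varepsilon}$, you are back in Soundararajan's original setup and you lose precisely the $\sqrt{\log_3 T}$, so the stated bound is out of reach. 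Your further demand that every divisor-quotient lie below $M_0$ is even stronger: since the set is divisor closed it contains $1$, so this forces $\max\mathcal{M}\le M_0=\exp\big(O(\sqrt{\log T\log_2 T\log_3 T})\big)$, which is sub-polynomial in $T$.

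The paper circumvents this by a different mechanism. The compactly supported cutoff is replaced by a dilated Gaussian, whose Fourier transform is \emph{positive}; after extending the integral to all of $\R$ and bounding separately the pieces over $|t|\le\sqrt T$ and $|t|\ge T$ (see \eqref{bound}), positivity lets one simply discard all off-diagonal terms and retain a lower bound (Lemma~\ref{m2lemD}). The missing ``additive control''---many elements of $\mathcal{M}$ may sit close together---is supplied by the local $\ell^2$-averaging \eqref{average}, which passes from $f$ on $\mathcal{M}$ to $r$ on a thinned set $\mathcal{N}$ with at most one point per interval of relative length $T^{-1}$. Finally, the restriction $k\le M$ is not built into $\mathcal{M}$ at all: a Rankin-trick estimate (Lemma~\ref{lem9}, word for word the same as Lemma~\ref{lem8}) shows that the tail $k>M$ contributes $o(A(N,1/2))$ once $M\ge M_0$. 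Your heuristic for why $1/2$ replaces $1/\sqrt2$---that capping divisor-quotients halves the affordable size of $\mathcal{M}$---therefore does not reflect the actual argument.
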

This theorem gives information about the precision of the resonance method as well as its limitations. We notice that the global maximum satisfies
\[ \|D_M\|_\infty:=\max_{t} |D_M(t)| \sim \sqrt{M}, \]
and hence we see that  our method gives us that when $M$ takes the minimal value $\exp\big(e\sqrt{\log T \log\log T \log\log\log  T/2}\big)$, the maximum on  $[\sqrt{T},T]$ is at least $\|D_M\|_\infty^{\eta/\log\log M}$ for some positive number $\eta$. This means that the value of the maximum ``predicted'' by the resonance method is at most a power of order $1/\log\log T$ (or equivalently of order $1/\log\log M$) off the true maximum (whatever it is). On the other hand,  while we know that $|D_M(t)|$ ``eventually'' will come arbitrarily close to the absolute maximum, the interval $[\sqrt{T},T]$ is of course far too short for us to guarantee, by general considerations,  that we get anywhere near $\| D_M \|_\infty$. Hence the resonance method could in fact be considerably more precise than what we can safely conclude that it is in this case.


A word on notation, before we turn to a general discussion of the resonance method and the proofs of our theorems: We will use the shorthand notation $\log_2 x:=\log\log x$ and $\log_3 x:=\log\log\log x$.
\section{The resonance method---general considerations}

The basic idea of either versions of the  resonance method is to identify a special Dirichlet polynomial
\[ R(t)=\sum_{m\in \mathcal{N}} r(m) m^{-it} \]
that ``resonates well'' with the object at hand, which in our case is the partial sum $\sum_{n\le x} n^{-\sigma-it}$  on a given interval. The precise meaning of this is that the integral of $|R(t)|^2$ (mollified by multiplication by a suitable smooth bump function) times $\sum_{n\le x} n^{-\sigma-it}$ is as large as possible, given that the coefficients $r(m)$ have a fixed square sum and also subject to whatever a priori restrictions we need to put on the set of integers $\mathcal{N}$. The method will not only produce large values, but also give information about which of the terms in $\sum_{n\le x} n^{-\sigma-it}$ that contribute in an ``essential'' way.

The technicalities will differ considerably depending on $\sigma$, for reasons that will become clear below. In our study of $\zeta(\sigma+it)$ in the range $3/4\le \sigma \le 1$, we will use Soundararajan's original method. This means that we choose a smooth function $\Psi$ compactly supported in $[1/2,1]$, taking values in the interval $[0,1]$ with
$\Psi(t)=1$ for $5/8\le t \le 7/8$. We define
\begin{align} \label{M1} M_1(R,T)& :=\int_{-\infty}^\infty |R(t)|^2 \Psi\Big(\frac{t}{T} \Big) dt, \\ \label{M2}
M_2(R,T)& :=\int_{-\infty}^\infty  \zeta(\sigma+it) |R(t)|^2 \Psi\Big(\frac{t}{T} \Big) dt.
\end{align}
Then
\begin{equation} \label{plain} \max_{T/2\le t \le T} \big|\zeta(\sigma+it)\big| \ge \frac{|M_2(R,T)|}{M_1(R,T)}, \end{equation}
and the goal is therefore to maximize the ratio on the right-hand side of \eqref{plain}. We require that
$\max \mathcal{N}\le T^{1-\varepsilon}$ for some fixed $\varepsilon$, $0<\varepsilon<1$, and get by straightforward computations (see \cite[pp. 471--472]{S}) that
\begin{equation} \label{Sound1}
M_1(R,T)=T\hat{\Psi}(0)\big(1+O(T^{-1})\big) \sum_{n\in \mathcal{N}} |r(n)|^2
\end{equation}
and
\begin{equation} \label{Sound2}
M_2(R,T)=T\hat{\Psi}(0)\sum_{n\in \mathcal{N}, mk=n}\frac{r(m) \overline{r(n)}}{k^{\sigma}}+O(T^{1-\sigma}\log T) \sum_{n\in \mathcal{N}} |r(n)|^2.
\end{equation}
Hence the problem of estimating the right-hand side of \eqref{plain} boils down to finding out how large the ratio
\begin{equation}\label{maxim} \sum_{n\in \mathcal{N}, mk=n}\frac{r(m) \overline{r(n)}}{k^{\sigma}}\Big/\sum_{n\in \mathcal{N}} |r(n)|^2 \end{equation}
can be under the a priori restriction that $\max \mathcal{N}\le T^{1-\varepsilon}$. This problem was solved in \cite[Theorem 2.1]{S} for $\sigma=1/2$.

As shown in \cite{BS2}, we can do better when $\sigma=1/2$ by removing the a priori restriction that $\max \mathcal{N}\le T^{1-\varepsilon}$, and this is also true when $\sigma$ is not too close to $1$. If we again manage to reduce the problem to that of maximizing a ratio like the one in \eqref{maxim}, then we clearly are in a better position. However, arriving at such an optimization problem is less straightforward, mainly because more terms will contribute in either of the sums representing respectively $M_1(R,T)$ and $M_2(R,T)$. An additional problem is that sets of integers $\mathcal{N}$ involved in making expressions like $\eqref{maxim}$ large typically enjoys a multiplicative structure, while estimating sums like those representing $M_1(R,T)$ and $M_2(R,T)$ requires some ``additive control''. We will now present the remedies, introduced in \cite{BS2}, for  getting around these problems.

We begin with the problem of ``additive control". We go ``backwards'' and start from the problem of maximizing
\begin{equation}\label{maximm} \sum_{n\in \mathcal{M}, mk=n}\frac{f(m) \overline{f(n)}}{k^{\sigma}}\Big/\sum_{n\in \mathcal{M}} |f(n)|^2 \end{equation}
for a suitable set $\mathcal{M}$ and arithmetic function $f(n)$ under the condition that $|\mathcal{M}|\le N$. We then extract the resonating Dirichlet polynomial from a solution (or approximate solution) to this problem as follows, assuming as we may that $f(n)$ is nonnegative. Following an idea from \cite{A}, we let $\mathcal{J}$ be the set of integers $j$ such that
\[ \Big[(1+T^{-1})^j,(1+T^{-1})^{j+1}\Big)\bigcap \mathcal{M} \neq \emptyset,  \]
and let $m_j$ be the minimum of  $\big[(1+T^{-1})^j,(1+T^{-1})^{j+1}\big)\bigcap \mathcal{M}$ for $j$ in $\mathcal{J}$. Then set
\[ \mathcal{N}:= \big \{ m_j: \ j\in \mathcal{J} \big\}\]
and
\begin{equation} \label{average} r(m):= \left(\sum_{n\in \mathcal{M}, 1-T^{-1}(\log T)^2 \le n/m \le 1+T^{-1}(\log T)^2} f(n)^2\right)^{1/2} \end{equation}
for every $m$ in $\mathcal{N}$. Note that plainly $|\mathcal{N}|\le |\mathcal{M}|\le N$. By taking the local $\ell^2$ average as in \eqref{average}, we get a precise relation between $f(n)$ and $r(n)$, while at the same time we get the desired ``additive control'' because each of the intervals $\big[(1+T^{-1})^j,(1+T^{-1})^{j+1}\big)$ contains at most one integer from  $\mathcal{N}$.

We turn next to the counterparts to \eqref{M1} and \eqref{M2}. We consider now a longer interval of the form $[T^\beta, T]$ with $0<\beta<1$; it will be convenient for us to fix once and for all $\beta=1/2$. Moreover, we use the Gaussian $\Phi(t):=e^{-t^2/2}$ as mollifier. Our replacements for \eqref{M1} and \eqref{M2} are then, respectively,
\begin{align} \widetilde{M}_1(R,T)& :=\int_{\sqrt{T}\le |t|\le T} |R(t)|^2 \Phi\Big(\frac{\log T}{T} t\Big) dt, \nonumber \\
\widetilde{M}_2(R,T)& :=\int_{\sqrt{T}\le |t| \le T}  \zeta(\sigma+it) |R(t)|^2 \Phi\Big(\frac{\log T}{T} t\Big) dt, \label{M22}
\end{align}
and we get that
\begin{equation} \label{plain2} \max_{\sqrt{T}\le t \le T} \big|\zeta(\sigma+it)\big| \ge \frac{|\widetilde{M}_2(R,T)|}{\widetilde{M}_1(R,T)}. \end{equation}

We state first the estimate for $\widetilde{M}_1(R,T)$ obtained in \cite[Formula (22)]{BS2}. This is a matter of direct computation based on the definitions given above.

\begin{lem}\label{m1lemma}
There exists an absolute constant $C$ such that
\begin{equation} \label{m1f} \widetilde{M}_1(R,T) \le C T (\log T)^3 \sum_{n\in \mathcal{M}} f(n)^2. \end{equation}
\end{lem}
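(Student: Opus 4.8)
The plan is a direct computation. I would enlarge the domain of integration, expand $|R(t)|^2$ into a double sum, integrate the Gaussian term by term, and then split the result into a diagonal and an off-diagonal part; the only arithmetic input is the multiplicative separation of $\mathcal N$ that is built into the construction \eqref{average}. Since $\Phi\ge 0$, extending the integral to all of $\R$ only increases it, so
\[
\widetilde M_1(R,T)\le\int_{-\infty}^{\infty}|R(t)|^{2}\,\Phi\Big(\frac{\log T}{T}\,t\Big)\,dt
=\sum_{m,m'\in\mathcal N}r(m)\,r(m')\int_{-\infty}^{\infty}\Big(\frac{m}{m'}\Big)^{-it}\Phi\Big(\frac{\log T}{T}\,t\Big)\,dt,
\]
where the interchange is legitimate because $\mathcal N$ is finite (it even suffices that $\sum r(m)<\infty$). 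Evaluating the Fourier transform of the Gaussian gives
\[
\widetilde M_1(R,T)\le\frac{\sqrt{2\pi}\,T}{\log T}\sum_{m,m'\in\mathcal N}r(m)\,r(m')\exp\!\left(-\frac{T^{2}}{2(\log T)^{2}}\Big(\log\frac{m}{m'}\Big)^{2}\right),
\]
whose diagonal part ($m=m'$) equals $\dfrac{\sqrt{2\pi}\,T}{\log T}\sum_{m\in\mathcal N}r(m)^{2}$.

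For the off-diagonal part I would use $r(m)r(m')\le\tfrac12\big(r(m)^{2}+r(m')^{2}\big)$ together with symmetry, which reduces everything to the bound, uniform in $m\in\mathcal N$,
\[
\sum_{\substack{m'\in\mathcal N\\ m'\ne m}}\exp\!\left(-\frac{T^{2}}{2(\log T)^{2}}\Big(\log\frac{m}{m'}\Big)^{2}\right)\ \ll\ \log T .
\]
This is exactly where the construction pays off. Writing $\mathcal N=\{m_{j}:j\in\mathcal J\}$ with $m_{j}\in[(1+T^{-1})^{j},(1+T^{-1})^{j+1})$, the map $j\mapsto m_{j}$ is strictly increasing and each of these intervals contains at most one element of $\mathcal N$, so $|\log(m_{j}/m_{j'})|\ge(|j-j'|-1)\log(1+T^{-1})\gg(|j-j'|-1)/T$. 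Hence the (at most two) indices with $|j-j'|=1$ contribute $O(1)$, and the remaining terms are bounded by a constant multiple of $\sum_{k\ge1}\exp\!\big(-ck^{2}/(\log T)^{2}\big)\ll\log T$ by comparison with a Gaussian integral. Combining the diagonal and off-diagonal estimates, and noting that the prefactor $T/\log T$ times $\log T$ is just $O(T)$, I get $\widetilde M_1(R,T)\ll T\sum_{m\in\mathcal N}r(m)^{2}$.

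It then remains to compare $\sum_{m\in\mathcal N}r(m)^{2}$ with $\sum_{n\in\mathcal M}f(n)^{2}$. By the definition \eqref{average} and interchanging the order of summation,
\[
\sum_{m\in\mathcal N}r(m)^{2}=\sum_{n\in\mathcal M}f(n)^{2}\;\#\big\{m\in\mathcal N:\ |n/m-1|\le T^{-1}(\log T)^{2}\big\},
\]
and for each fixed $n$ the admissible $m$ lie in a multiplicative interval around $n$ of logarithmic length $\asymp T^{-1}(\log T)^{2}$. Since each interval $[(1+T^{-1})^{j},(1+T^{-1})^{j+1})$ has logarithmic length $\asymp T^{-1}$ and carries at most one element of $\mathcal N$, that count is $O\big((\log T)^{2}\big)$. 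Assembling the three displays, $\widetilde M_1(R,T)\ll T(\log T)^{2}\sum_{n\in\mathcal M}f(n)^{2}$, which in particular yields the asserted bound with $(\log T)^{3}$.

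I do not expect a genuine obstacle here; as the excerpt says, this is ``a matter of direct computation''. The only points requiring care are the bookkeeping of the two logarithmic losses --- one from the width $T/\log T$ of the mollifier measured against the $T^{-1}$-scale gaps of $\mathcal N$ in the off-diagonal sum, and one from the over-counting inherent in the local $\ell^{2}$-average \eqref{average} --- and the verification that the off-diagonal contribution dominates the diagonal one (so that dividing later by $\widetilde M_1$ still leaves room). Both the multiplicative separation of $\mathcal N$ and the choice of averaging window $T^{-1}(\log T)^{2}$ were designed precisely so that these steps go through.
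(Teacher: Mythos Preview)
Your argument is correct and is precisely the ``direct computation based on the definitions'' that the paper invokes (deferring to \cite[Formula (22)]{BS2}): extend to $\R$, expand the square, evaluate the Gaussian Fourier transform, exploit the $T^{-1}$-separation of $\mathcal N$ to control the off-diagonal, and finally unfold \eqref{average} to pass from $r$ to $f$. Your bookkeeping in fact yields the slightly sharper bound $\widetilde M_1(R,T)\ll T(\log T)^{2}\sum_{n\in\mathcal M}f(n)^{2}$, which of course implies \eqref{m1f}; the only inessential slip is the closing remark that one must check ``the off-diagonal contribution dominates the diagonal one'' --- for an upper bound on $\widetilde M_1$ no such comparison is needed.
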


To estimate \eqref{M22}, we extend the integral to the whole real line, so that we can take advantage of the fact that the Fourier transform $\widehat{\Phi}$ of $\Phi$ is positive. We chose the larger set $\sqrt{T}\le |t|\le T$ and a different dilation factor of the mollifier ($(\log T)/T$ instead of $1/T$), because these choices allow us to get the control we need of the integral over the complementary set. Indeed, the estimation for $|t|\ge T$ is trivial because of the rapid decay of the Gaussian and our choice of dilation factor, while the following estimate takes care of the  interval $|t|\le \sqrt{T}$: For arbitrary numbers $\lambda>0$, $0<\beta <1$, and $0<\sigma<1$, we have
\begin{equation} \label{bound}
\Big| \sum_{1\le n\le M}n^{-\sigma}\int_{-T^\beta}^{T^{\beta}}\left(\frac{\lambda}{n}\right)^{it}\Phi\Big(\frac{\log T}{T} t\Big) dt\Big|
\le C \max\left(T^{\beta},M^{1-\sigma} \log M\right),
\end{equation}
where the constant $C$ is independent of $\lambda$, $\beta$, $\sigma$. This is proved by making a minor adjustment of the proof of \cite[Lemma 4]{BS2}, which deals only with the case $\sigma=1/2$. Doing the same computations as in \cite{BS2}, relying crucially on the positivity of $\widehat{\phi}$, we arrive at the following lemma (see formula (14) in \cite{BS2}):

\begin{lem}\label{m2lem} Suppose $1/2\le \sigma \le 1$ and $|\mathcal{M}|\le \sqrt{T}$. There exist absolute positive constants $c, C$ such that
\[ \widetilde{M}_2(R,T)\ge c \left(\frac{T}{\log T} \sum_{n\in\mathcal{M}, mk=n, k\le T}\frac{f(n)f(m)}{k^\sigma}- C T (\log T)^4  \sum_{n\in \mathcal{M}}f(n)^2\right). \] 
\end{lem}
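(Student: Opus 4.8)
The plan is to expand $|R(t)|^2=\sum_{\mu,\nu\in\mathcal N}r(\mu)\overline{r(\nu)}(\mu/\nu)^{it}$ and $\zeta(\sigma+it)$ via the approximate formula~\eqref{approx} inside $\widetilde M_2(R,T)$, then extend the $t$-integral from $\sqrt T\le|t|\le T$ to all of $\R$ and use the positivity of $\widehat\Phi$ to discard the off-diagonal contributions with the wrong sign. First I would write, for $|t|\le T$,
\[
\zeta(\sigma+it)=\sum_{k\le x}k^{-\sigma-it}-\frac{x^{1-\sigma-it}}{1-\sigma-it}+O(x^{-\sigma})
\]
with $x=T$ (so $|t|\le x$ is satisfied on the range where we need it); plugging this in and integrating term by term gives a main term
\[
\sum_{\mu,\nu\in\mathcal N,\,k\le T}\frac{r(\mu)\overline{r(\nu)}}{k^\sigma}\int_\R\Big(\frac{\mu}{k\nu}\Big)^{it}\Phi\Big(\frac{\log T}{T}t\Big)\,dt
=\frac{T}{\log T}\sum_{\mu,\nu\in\mathcal N,\,k\le T}\frac{r(\mu)\overline{r(\nu)}}{k^\sigma}\,\widehat\Phi\Big(\frac{T}{\log T}\log\frac{k\nu}{\mu}\Big),
\]
since $\widehat\Phi$ is (up to the normalization $T/\log T$) again a Gaussian, hence nonnegative. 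Because each dyadic-type block $[(1+T^{-1})^j,(1+T^{-1})^{j+1})$ contains at most one element of $\mathcal N$, the arguments $\frac{T}{\log T}\log(k\nu/\mu)$ are bounded away from $0$ unless $k\nu=\mu$ exactly, so all terms are $\ge 0$ and we may keep only the diagonal $k\nu=\mu$, where $\widehat\Phi(0)$ is a positive constant. At this stage the sum over $k\nu=\mu$ in terms of $r$ must be converted back to a sum in terms of $f$: by the definition~\eqref{average} of $r(m)$ as a local $\ell^2$-average of $f$, and the fact that $\mathcal N$ is an $(1+T^{-1})$-net of $\supp$ of $f$ spaced so that $n/m\in[1-T^{-1}(\log T)^2,1+T^{-1}(\log T)^2]$ captures each $n\in\mathcal M$, one recovers
\[
\sum_{\mu=k\nu,\ \mu,\nu\in\mathcal N,\ k\le T}\frac{r(\mu)\overline{r(\nu)}}{k^\sigma}\ \gg\ \sum_{n=mk,\ n\in\mathcal M,\ k\le T}\frac{f(n)f(m)}{k^\sigma},
\]
with the loss of a constant only; this is exactly the computation carried out in \cite{BS2} for $\sigma=1/2$, and it goes through verbatim here since $\sigma$ enters only through the harmless factor $k^{-\sigma}\le 1$.

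It then remains to bound the error terms. The tail $|t|\ge T$ of the extended integral is $O(T^{-A})$ for every $A$ by the rapid decay of the Gaussian evaluated at argument $\ge\log T$, multiplied by the trivial bound $|R(t)|^2\le(\sum_{n\in\mathcal N}|r(n)|)^2\le N\sum_{n\in\mathcal M}f(n)^2$; with $N\le\sqrt T$ this is negligible against $T(\log T)^4\sum f(n)^2$. The piece $|t|\le\sqrt T$ that we added is controlled by~\eqref{bound}: for each pair $\mu,\nu$ it contributes, after separating the $k$-sum and the $\frac{x^{1-\sigma-it}}{1-\sigma-it}$ term, at most $C\max(T^{1/2},T^{1-\sigma}\log T)\le CT^{1-\sigma}\log T$ (using $\sigma\le 1$), and summing the $|r(\mu)r(\nu)|$ over the $\le N^2\le T$ pairs, together with $\sum|r(n)|^2=\sum_{n\in\mathcal M}f(n)^2$ after Cauchy--Schwarz and the net structure, yields a bound $\ll T^{2-\sigma}(\log T)\sum f(n)^2$; since $\sigma\ge 1/2$ this is $\ll T^{3/2}\log T\,\sum f(n)^2$, again absorbed into $CT(\log T)^4\sum f(n)^2$. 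Finally the $O(x^{-\sigma})=O(T^{-\sigma})$ remainder in~\eqref{approx}, integrated over $|t|\le T$ and weighted by $|R(t)|^2$, is $O(T^{1-\sigma}\cdot\tfrac{T}{\log T}\cdot$ diagonal$)$ and is dominated by the same error. Collecting, $\widetilde M_2(R,T)\ge c\big(\tfrac{T}{\log T}\sum_{n\in\mathcal M,\,mk=n,\,k\le T}\tfrac{f(n)f(m)}{k^\sigma}-CT(\log T)^4\sum_{n\in\mathcal M}f(n)^2\big)$.

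The main obstacle is not any single estimate but the bookkeeping in the passage from $r$ back to $f$: one must check that the local $\ell^2$-averaging in~\eqref{average}, the $(1+T^{-1})$-spacing of $\mathcal N$, and the window $|n/m-1|\le T^{-1}(\log T)^2$ are compatible so that the diagonal term $\sum_{k\nu=\mu}r(\mu)\overline{r(\nu)}k^{-\sigma}$ genuinely reproduces $\sum_{mk=n}f(n)f(m)k^{-\sigma}$ up to a constant and no multiplicative relation $n=mk$ in $\mathcal M$ is lost or double-counted — and to verify that the near-diagonal terms $k\nu\neq\mu$, where $\widehat\Phi$ is nonnegative, can indeed all be dropped rather than needing to be estimated. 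This is precisely the content of the computations in \cite{BS2}; since $\sigma$ appears there only as the innocuous weight $k^{-\sigma}$, the only genuinely new input is the $\sigma$-uniform estimate~\eqref{bound}, whose proof is the stated minor modification of \cite[Lemma 4]{BS2}.
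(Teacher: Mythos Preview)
Your overall approach is the right one and coincides with the paper's: the paper does not give a standalone proof of this lemma but refers to the computations in \cite{BS2} (formula~(14) there), whose essential ingredients are exactly those you list---the approximate functional equation~\eqref{approx}, extension of the integral to $\R$, positivity of $\widehat\Phi$ to drop all but the (near-)diagonal terms, the estimate~\eqref{bound} for $|t|\le\sqrt T$, and the passage between $r$ and $f$ via the local averaging~\eqref{average}. So structurally you are on the same track.

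However, your bookkeeping for the $|t|\le\sqrt T$ error is wrong as written, and the error is not cosmetic. You assert $\max(T^{1/2},T^{1-\sigma}\log T)\le T^{1-\sigma}\log T$; this fails for every $\sigma$ bounded away from $1/2$, since then $T^{1-\sigma}\ll T^{1/2}$. You then arrive at a bound $\ll T^{2-\sigma}(\log T)\sum f(n)^2$, and for $\sigma=1/2$ this is $T^{3/2}\log T\sum f(n)^2$, which you claim is ``absorbed into $CT(\log T)^4\sum f(n)^2$''. It is not: $T^{3/2}$ dominates $T(\log T)^4$. As stated, your error term would swamp the main term and the inequality would be vacuous.

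The fix is easy but you must carry it out correctly. For $1/2\le\sigma\le 1$ one has $\max(T^{1/2},T^{1-\sigma}\log T)\le T^{1/2}\log T$. Then, applying~\eqref{bound} termwise and summing,
\[
\Big|\sum_{\mu,\nu\in\mathcal N}r(\mu)\overline{r(\nu)}\sum_{k\le T}k^{-\sigma}\!\!\int_{|t|\le\sqrt T}\!\!\Big(\tfrac{\nu}{k\mu}\Big)^{it}\Phi\Big(\tfrac{\log T}{T}t\Big)dt\Big|
\ \ll\ T^{1/2}\log T\;\Big(\sum_{\mu\in\mathcal N}|r(\mu)|\Big)^2.
\]
By Cauchy--Schwarz, $(\sum_{\mu}|r(\mu)|)^2\le|\mathcal N|\sum_{\mu}|r(\mu)|^2$, and $|\mathcal N|\le|\mathcal M|\le\sqrt T$. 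Finally, from the definition~\eqref{average} each $f(n)^2$ is counted in at most $O((\log T)^2)$ of the $r(m)^2$, so $\sum_{\mu}|r(\mu)|^2\ll(\log T)^2\sum_{n\in\mathcal M}f(n)^2$ (your equality $\sum|r|^2=\sum f^2$ is not correct, only the two-sided bound up to a factor $(\log T)^2$). Altogether the $|t|\le\sqrt T$ contribution is $\ll T(\log T)^{3}\sum f(n)^2$, which is indeed absorbed into $CT(\log T)^4\sum f(n)^2$.

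One further remark on the main term: it is not quite right to say that one keeps only the strict diagonal $k\nu=\mu$ with $\mu,\nu\in\mathcal N$ and then converts to $f$. Rather, one keeps all near-diagonal terms where $\big|\tfrac{T}{\log T}\log(k\nu/\mu)\big|$ is small (so $\widehat\Phi$ is bounded below), and it is precisely the width of the averaging window in~\eqref{average} that ensures every genuine relation $n=mk$ in $\mathcal M$ is captured by such a near-diagonal pair $(\mu,\nu)\in\mathcal N^2$. You flag this correctly as the delicate point; just be aware that the strict diagonal in $\mathcal N$ alone would not suffice.
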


The powers of $\log T$ are harmless if $\sigma\le \sigma_0<1$ for some fixed $\sigma_0$, but they make this lemma useless when $\sigma$ is close to $1$ since the lower bound in \eqref{Norman} is of order $\log_2 T$. This is why we need  both versions of the resonance method when  considering the whole range
$1/2+1/\log_2 T\le \sigma \le 1-1/\log_2 T$.

The resonance method for the partial sum problem yields the same bounds, up to an obvious modification
of the indices in the summation in Lemma~\ref{m2lem}. Indeed, defining
\[
\widetilde{\widetilde{M}}_2(R,T) :=\int_{\sqrt{T}\le |t| \le T}  D_M(t)  |R(t)|^2 \Phi\Big(\frac{\log T}{T} t\Big) dt,
\]
we get:
\begin{lem} \label{m2lemD} Suppose that $|\mathcal{M}|\le \sqrt{T}$. There exist absolute positive constants $c, C$ such that
\[ \widetilde{\widetilde{M}}_2(R,T)\ge c \left(\frac{T}{\log T} \sum_{n\in\mathcal{M}, mk=n, k\le M}\frac{f(n)f(m)}{\sqrt{k}}- C T (\log T)^4  \sum_{n\in \mathcal{M}}f(n)^2\right). \] 
\end{lem}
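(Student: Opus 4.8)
The plan is to rerun, essentially verbatim, the computation behind Lemma~\ref{m2lem} (that is, the argument leading to formula~(14) in \cite{BS2}), now with $\zeta(\sigma+it)$ replaced by $D_M(t)$ and $\sigma=1/2$. The one genuine simplification is that $D_M$ is itself a finite Dirichlet polynomial, so there is no need to invoke the approximate formula \eqref{approx}: wherever the treatment of $\zeta$ relies on a truncation of effective length $\asymp T$, we now have the honest length $M$, and this is the sole reason the range of summation changes from $k\le T$ to $k\le M$.

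First I would extend the integral to all of $\R$, writing $\widetilde{\widetilde{M}}_2(R,T)=\int_{\R}-\int_{|t|\le\sqrt T}-\int_{|t|\ge T}$, so that for a lower bound it suffices to estimate $\int_{\R} D_M(t)|R(t)|^2\Phi\big(\tfrac{\log T}{T}t\big)\,dt$ from below and to bound the two remaining integrals in absolute value. The tail $|t|\ge T$ is negligible: $|D_M(t)|\le CM^{1/2}$, $|R(t)|^2\le\big(\sum_{a\in\mathcal N}r(a)\big)^2\le|\mathcal N|\sum_{a\in\mathcal N}r(a)^2=|\mathcal N|\sum_{n\in\mathcal M}f(n)^2\le\sqrt T\sum_{n\in\mathcal M}f(n)^2$, while $\int_{|t|\ge T}\Phi\big(\tfrac{\log T}{T}t\big)\,dt$ is exponentially small in $(\log T)^2$ thanks to the dilation factor $(\log T)/T$, so this piece is far smaller than $CT\sum_{n\in\mathcal M}f(n)^2$. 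For the range $|t|\le\sqrt T$ I would expand $|R(t)|^2=\sum_{a,b\in\mathcal N}r(a)\overline{r(b)}(b/a)^{it}$ and rewrite the integral as $\sum_{a,b\in\mathcal N}r(a)\overline{r(b)}\sum_{k\le M}k^{-1/2}\int_{-\sqrt T}^{\sqrt T}(b/(ak))^{it}\Phi\big(\tfrac{\log T}{T}t\big)\,dt$; the inner sum-and-integral is $O\big(\max(\sqrt T,M^{1/2}\log M)\big)$ by \eqref{bound} with $\sigma=1/2$, $\beta=1/2$, $\lambda=b/a$, and summing over $a,b$ with the same Cauchy--Schwarz bound for $\big(\sum_{a\in\mathcal N}r(a)\big)^2$ as above again yields a quantity absorbed into $CT(\log T)^4\sum_{n\in\mathcal M}f(n)^2$ in the range of $M$ relevant here.

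It remains to bound the main term from below. Since $D_M(t)=\sum_{k\le M}k^{-1/2-it}$ is a Dirichlet polynomial, expanding $|R(t)|^2$ as above and integrating termwise gives $\int_{\R} D_M(t)|R(t)|^2\Phi\big(\tfrac{\log T}{T}t\big)\,dt=\tfrac{T}{\log T}\sum_{k\le M}k^{-1/2}\sum_{a,b\in\mathcal N}r(a)\overline{r(b)}\,\widehat{\Phi}\big(\tfrac{T}{\log T}\log\tfrac{b}{ak}\big)$, and every summand is nonnegative once $f$, hence $r$, is taken nonnegative. Keeping only the terms in which $b/(ak)$ lies within a multiplicative $O((\log T)/T)$ of $1$, where $\widehat{\Phi}$ is bounded below by an absolute positive constant, and then using the additive control built into $\mathcal N$ — each interval $\big[(1+T^{-1})^j,(1+T^{-1})^{j+1}\big)$ contains at most one element of $\mathcal N$ — together with the relation \eqref{average} between $r$ and $f$ and Cauchy--Schwarz for the local $\ell^2$-sums, one arrives at a lower bound of the form $c\,\tfrac{T}{\log T}\sum_{n\in\mathcal M,\,mk=n,\,k\le M}\tfrac{f(n)f(m)}{\sqrt k}$, exactly as in the derivation of formula~(14) in \cite{BS2}; the restriction $k\le M$ merely records that $D_M$ carries only the frequencies $k^{-it}$ with $k\le M$. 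Combining this with the two tail estimates proves the lemma.

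The only step requiring real care — and it is the same one as in \cite{BS2} — is this last passage from the resonator sum over $\mathcal N$ back to the weighted sum over $\mathcal M$: one must check that the local $\ell^2$-averaging \eqref{average}, combined with the at-most-one-element-per-interval property of $\mathcal N$, indeed reconstructs $\sum_{mk=n}f(n)f(m)k^{-1/2}$ with an absolute constant and with the stated range $k\le M$, while every discarded off-diagonal contribution is either nonnegative (hence harmless for a lower bound) or already subsumed by the error term $CT(\log T)^4\sum_{n\in\mathcal M}f(n)^2$. Passing from $\zeta(1/2+it)$ to the genuine polynomial $D_M(t)$ introduces no new difficulty; it only removes the appeal to \eqref{approx}.
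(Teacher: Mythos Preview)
Your argument is correct and coincides with the paper's own (implicit) proof: the paper simply asserts that Lemma~\ref{m2lemD} follows from the computation behind Lemma~\ref{m2lem}, i.e.\ formula~(14) of \cite{BS2}, with $\zeta$ replaced by the Dirichlet polynomial $D_M$ and the summation range for $k$ adjusted accordingly. One small slip to fix: $\sum_{a\in\mathcal N}r(a)^2$ is not \emph{equal} to $\sum_{n\in\mathcal M}f(n)^2$ but only bounded by $C(\log T)^2\sum_{n\in\mathcal M}f(n)^2$, since each $n\in\mathcal M$ may lie in up to $O((\log T)^2)$ of the overlapping windows in \eqref{average}; this harmless extra logarithmic factor is absorbed by the stated error term.
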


We are now left with the problem of making the first sum on the right-hand side large; the problem of making the right-hand side of \eqref{maximm} large is just the special case when $M=[T]$. In the next session, we will show how to deal with this problem for a wide range of values of $M$.

\section{G\'{a}l-type extremal problems and proof of Theorem~\ref{extreme}}

\subsection{Background on G\'{a}l-type extremal problems}\label{GalBohr}

Before turning to the extremal problems arrived at in the previous section, we would like to place them in context by
describing briefly a line of research that has been instrumental for our approach. This is the study of greatest common divisor (GCD) sums of the form
\begin{equation}\label{gcda}
\sum_{m,n \in \mathcal{M}}\frac{(m,n)^{2\sigma}}{(m
n)^\sigma}
\end{equation}
and the associated (normalized) quadratic forms
\begin{equation}\label{gcdb}
\sum_{m,n\in \mathcal{M}} f(m) f(n) \frac{(m,n)^{2\sigma}}{(m n)^\sigma}\big/ \sum_{n\in \mathcal{M}}f(n)^2,
\end{equation}
where $\mathcal{M}$ is as above and we again assume that $f(n)$ is nonnegative and does not vanish on $\mathcal{M}$. We observe that \eqref{maximm} is smaller than \eqref{gcdb} because the former is obtained from the latter by restricting the sum in the nominator to a subset of $\mathcal{M}\times\mathcal{M}$. In most cases of interest when $1/2\le \sigma <1$, we may obtain a reverse inequality so that the two expressions are of the same order of magnitude. In general, it is clear that if \eqref{gcdb} is large, then also \eqref{maximm} will be large.

The problem is to decide how large either of the two expressions \eqref{gcda} or \eqref{gcdb} can be under the assumption that $|\mathcal{M}|=N$, and more specifically we are interested in the asymptotics when $N\to\infty$ and  $\sigma$ is fixed with $0<\sigma\le 1$.  We refer to \eqref{gcda} and \eqref{gcdb} as G\'{a}l-type sums because the topic begins with a sharp bound of G\'{a}l \cite{G} (of order $CN(\log \log N)^2$) for the growth of \eqref{gcda} when $\sigma=1$.  Dyer and Harman \cite{DH} obtained the first nontrivial estimates for the range $1/2 \le \sigma <1$, and during the past few years, we have reached an essentially complete understanding for the full range $0< \sigma\le 1$, thanks to the papers \cite{ABS, BHS, BS, LR}. The techniques used for different values of $\sigma$ differ considerably, and the problem is particularly delicate for $\sigma=1/2$ at which an interesting ``phase transition'' occurs. We refer to \cite{SS} for an overview of these results and to \cite{ABS, LR} for information about the many different applications of such asymptotic estimates.

It is the insight accumulated in this research that has led to the constructions given below. More specifically, we will follow G\'{a}l \cite{G} when $\sigma$ is close to 1 and \cite{BS2} when $\sigma$ is close to 1/2. The reader will notice that our set $\mathcal{M}$ will contain very smooth numbers when $\sigma$ is close to $1$ in contrast to what happens near $\sigma=1/2$.  Our treatment of the latter case shows that more and more primes are needed when $\sigma$ decreases; the simplest possible choice (made by Aistleitner in \cite{A}) of taking $r$ to be of size $\log N/\log 2$ and the $n_j$ to be the divisors of the square-free number $p_1\cdots p_r$ will be nearly optimal only when $\sigma$ is ``far'' from the endpoints $1$ and $1/2$. Translating this philosophy to Soundararajan's method, we find that the terms picked out in the approximating sum $\sum_{n\le T} n^{-\sigma-it}$ correspond to decreasingly smooth numbers when $\sigma$ goes from $1$ to $1/2$.

\subsection{Levinson's case $\sigma=1$ revisited} It is instructive to consider first the endpoint case $\sigma=1$.
We will now show that
\begin{equation}\label{Gal} \max_{T/2\le t \le T} |\zeta(1+it)|\ge e^{\gamma}\log_2 T + O(\log_3 T). \end{equation}
This estimate is slightly worse than \eqref{Norman} and the best known result of Granville and Soundararajan \cite{GS}, but the benefit is the simplicity of the proof and also that the interval has been shortened. We notice at this point that Hilberdink got the estimate \eqref{Norman} by his version of the resonance method.

We fix a positive number $x$ and an integer $\ell$ (to be determined later) and let $\mathcal{M}$ be the set of divisors of the number
\[ K=K(x,\ell):=\prod_{p\le x} p^{\ell-1}. \]
We require that $K\le [\sqrt{T}]$ and choose $r(n)$ to be the characteristic function of $\mathcal{M}$.
A computation shows that
\[  \sum_{n\in \mathcal{M}, mk=n} \frac{1}{k^\sigma} =\prod_{p\le x} \Big(\ell +\sum_{\nu=1}^{\ell-1} \frac{\ell-\nu}{p^{\nu\sigma}}\Big).\]
Hence
\begin{equation}\label{small}
 \sum_{mk=n}\frac{r(m) r(n)}{k^{\sigma}}\Big/\sum_{n\in \mathcal{M}} r(n)^2
=\prod_{p\le x} \Big(1 +\sum_{\nu=1}^{\ell-1} \Big(1-\frac{\nu}{\ell}\Big)p^{-\nu\sigma}\Big).
 \end{equation}
We now set $\sigma=1$ and find that
\begin{align*} \sum_{mk=n}\frac{r(m) r(n)}{k}\Big/\sum_{n\in \mathcal{M}} r(n)^2
&=\prod_{p\le x} \Big((1-p^{-1})^{-1}+\sum_{\nu=2}^{\ell-1} \frac{\nu}{\ell}p^{-\nu}+O(p^{-\ell }) \Big)\\
&=(1+O(\ell^{-1}))\prod_{p\le x} (1-p^{-1})^{-1} \\
&= \left(1+O(\ell^{-1})+O\Big(\frac{1}{\sqrt{x}\log x}\Big)\right) e^{\gamma}\log x,\end{align*}
where we in the last step used Mertens's third theorem (see \cite{DP} for a precise analysis of the error term).
By the prime number theorem, we may choose $x=(\log T)/(2\log_2 T)$  and $\ell=[\log_2 T]$ if $T$ is large enough. Taking into account \eqref{Sound1} and \eqref{Sound2}, we obtain the desired result \eqref{Gal}.

\subsection{The case $3/4 \le \sigma \le 1-1/\log_2 T$}\label{gal}
We follow the argument of the preceding subsection up to \eqref{small}, from which we deduce that
\begin{align}
 \sum_{mk=n}\frac{r(m) \overline{r(n)}}{k^{\sigma}}\Big/\sum_{n\in \mathcal{M}} |r(n)|^2 \nonumber
& \ge \prod_{p\le x} \Big(1 + \Big(1-\frac{1}{\ell}\Big)p^{-\sigma}\Big) \\
& \ge \prod_{p\le x} \Big(1 + p^{-\sigma}\Big)^{1-\frac{1}{\ell}};  \label{small2} \end{align}
here we used Bernoulli's inequality in the last step. We will use the following lemma to estimate the latter expression.
\begin{lem} \label{psum} There exists an absolute constant $C$ such that
\[ \sum_{p\le x} p^{-\sigma}\ge
                       \sigma \log_2 x +C+ \frac{ x^{1-\sigma}}{(1-\sigma)\log x} \]
whenever $(1-\sigma)\log x \ge 1/2$.
\end{lem}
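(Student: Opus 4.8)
The plan is to estimate $\sum_{p\le x} p^{-\sigma}$ by comparing it to the corresponding integral against the density of primes and extracting the two distinct contributions: the ``small primes'' part, which behaves like $\sigma\log_2 x$ (an analogue of Mertens's theorem), and the ``large primes'' part near $p\approx x$, which behaves like $x^{1-\sigma}/((1-\sigma)\log x)$. The hypothesis $(1-\sigma)\log x\ge 1/2$ is exactly what guarantees that the second term is genuinely of the stated order (for $\sigma$ very close to $1$ it would be negligible and the bound would reduce to a Mertens-type statement).

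First I would invoke the prime number theorem in the form $\pi(t)=\operatorname{li}(t)+O(t e^{-c\sqrt{\log t}})$, or more modestly Chebyshev-type bounds plus partial summation, to write
\[
\sum_{p\le x} p^{-\sigma} = \int_{2^-}^{x} t^{-\sigma}\, d\pi(t) = \frac{\pi(x)}{x^\sigma} + \sigma\int_2^x \frac{\pi(t)}{t^{\sigma+1}}\, dt + O(1).
\]
Replacing $\pi(t)$ by $t/\log t$ (with the PNT error absorbed into the $O(1)$, using that $\int_2^\infty t^{-\sigma}e^{-c\sqrt{\log t}}\,dt/\log t$ converges uniformly for $\sigma\ge 1/2$, say), the main term becomes
\[
\frac{x^{1-\sigma}}{\log x} + \sigma\int_2^x \frac{dt}{t^\sigma \log t} + O(1).
\]
The first summand is already of the shape $x^{1-\sigma}/\log x$, which dominates $x^{1-\sigma}/((1-\sigma)\log x)$ only when $1-\sigma$ is bounded below — here the hypothesis $(1-\sigma)\log x\ge 1/2$ lets us instead keep track of it together with what the integral contributes near the upper endpoint.

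The crux is the integral $\int_2^x \frac{dt}{t^\sigma\log t}$. I would split it at $t=\sqrt{x}$ (or at any point $x^\theta$ with $0<\theta<1$). On $[2,\sqrt{x}]$ we have $\log t\le \tfrac12\log x$ in the sense that the main contribution comes from comparing with $\sigma\int_2^x dt/(t\log t)=\sigma\log_2 x + O(1)$, after checking that $\sigma\int_2^x (t^{-\sigma}-t^{-1})\,dt/\log t$ is bounded below by an absolute constant (it is negative but bounded, since $t^{-\sigma}-t^{-1}\le t^{-1/2}$ and $\int_2^\infty t^{-1/2}\,dt/\log t<\infty$). On $[\sqrt{x},x]$, the factor $1/\log t$ is essentially $1/\log x$ up to a factor $2$, so
\[
\sigma\int_{\sqrt x}^x \frac{dt}{t^\sigma\log t} \ge \frac{c\,\sigma}{\log x}\cdot\frac{x^{1-\sigma}-x^{(1-\sigma)/2}}{1-\sigma}
= \frac{x^{1-\sigma}}{(1-\sigma)\log x}\bigl(c\sigma - o(1)\bigr),
\]
and using $(1-\sigma)\log x\ge 1/2$ one sees $x^{(1-\sigma)/2}\le \sqrt{x^{1-\sigma}}$ is much smaller, so this is $\ge x^{1-\sigma}/((1-\sigma)\log x)$ up to an absolute constant factor. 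The only real care needed is to track the absolute constants so that the lower bound comes out as a clean sum $\sigma\log_2 x + C + x^{1-\sigma}/((1-\sigma)\log x)$ rather than with a constant in front of the last term; this is arranged by choosing the splitting point appropriately and being slightly wasteful (the lemma only needs $\ge$, not an asymptotic), exploiting that $\sigma\ge 1/2$ throughout the intended range of application. The main obstacle is thus purely bookkeeping: ensuring the large-prime contribution is bounded below by the full $x^{1-\sigma}/((1-\sigma)\log x)$ and not merely a constant multiple of it, which is where the hypothesis $(1-\sigma)\log x\ge 1/2$ is used decisively.
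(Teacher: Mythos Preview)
Your partial-summation start is exactly the same as the paper's: both arrive at
\[
\sum_{p\le x} p^{-\sigma}\ \ge\ \frac{x^{1-\sigma}}{\log x}\;+\;\sigma\int_2^x\frac{dt}{t^{\sigma}\log t}\;+\;C'.
\]
The divergence comes in how you estimate the integral, and here your proposal has a genuine gap.

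Your plan is to split at $x^\theta$, bound $\int_2^{x^\theta}$ below by $\int_2^{x^\theta}\frac{dt}{t\log t}=\log_2 x+\log\theta+O(1)$, and bound $\int_{x^\theta}^{x}$ below by $\tfrac{1}{\log x}\int_{x^\theta}^{x}t^{-\sigma}\,dt=\dfrac{x^{1-\sigma}-x^{\theta(1-\sigma)}}{(1-\sigma)\log x}$. Combining with the boundary term and using $1+\tfrac{\sigma}{1-\sigma}=\tfrac{1}{1-\sigma}$, this gives
\[
\sigma\log_2 x+\frac{x^{1-\sigma}}{(1-\sigma)\log x}-\frac{\sigma\,x^{\theta(1-\sigma)}}{(1-\sigma)\log x}+\sigma\log\theta+O(1).
\]
Setting $a=(1-\sigma)\log x\ge\tfrac12$, the unwanted term is $\sigma e^{\theta a}/a$. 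For this to be $O(1)$ uniformly you need $\theta a\le\log a+O(1)$, hence $\theta\le(\log a)/a+O(1/a)$; but then $\sigma\log\theta\le-\sigma\log a+O(1)$, which is unbounded below as $a\to\infty$. So no choice of splitting point---fixed or depending on $\sigma,x$---recovers the full coefficient $1$ on $x^{1-\sigma}/((1-\sigma)\log x)$ together with a bounded additive constant. Your remark that this is ``purely bookkeeping'' is therefore not correct: the loss is intrinsic to the two crude minorants (replace $t^{-\sigma}$ by $t^{-1}$ on one piece, $\log t$ by $\log x$ on the other). Incidentally, $t^{-\sigma}-t^{-1}$ is \emph{positive} for $t>1$ and $\sigma<1$, and $\int_2^\infty t^{-1/2}(\log t)^{-1}\,dt$ diverges, so that side comment is also off.

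The paper avoids the loss by computing the integral exactly rather than splitting. After the substitution $u=\log\log y$,
\[
\int_2^x\frac{dy}{y^{\sigma}\log y}=\int_{\log_2 2}^{\log_2 x}e^{(1-\sigma)e^{u}}\,du
=\log_2 x-\log_2 2+\int_{(1-\sigma)\log 2}^{(1-\sigma)\log x}\frac{e^{y}-1}{y}\,dy,
\]
and then the elementary bound $\int_0^a\frac{e^y-1}{y}\,dy\ge \frac{e^a}{a}-\frac{a+1}{a}$ delivers exactly $\dfrac{x^{1-\sigma}}{(1-\sigma)\log x}+O(1)$ with no lost constant. This exact identity (which cleanly separates the Mertens-type part $\log_2 x$ from the exponential-integral part) is the missing idea in your argument.
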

\begin{proof}
By Abel summation and the inequality $\pi(x)>x/\log x$ which is valid for $x\ge 17$ (see \cite{RSc}), we find that
\begin{equation} \label{primesum} \sum_{p\le x} p^{-\sigma} \ge \frac{x^{1-\sigma}}{\log x}+\sigma \int_2^{x} \frac{dy}{y^{\sigma}\log y}+C', \end{equation}
where $C'$ is an absolute constant. Making the change of variables $u=\log_2 y$, we see that
\begin{align*} \int_2^x \frac{dy}{y^{\sigma}\log y} & =\int_{\log_2 2}^{\log_2 x} e^{(1-\sigma)e^u} du \\
& =\log_2 x -\log_2 2+\sum_{j=1}^{\infty} \frac{(1-\sigma)^j (\log^{j}x-\log^j 2)}{j \cdot j!} \\
&=\log_2 x -\log_2 2+\int_{(1-\sigma)\log 2}^{(1-\sigma)\log x} \frac{e^y-1}{y}dy .\end{align*}
Now using the trivial bound
\[ \int_{0}^a \frac{e^y-1}{y}dy\ge \frac{e^a}{a}-\frac{a+1}{a}   \]
 and returning to \eqref{primesum}, we obtain the desired estimate.
 \end{proof}

We are now prepared to give the first part of the proof of Theorem~\ref{extreme}.

\begin{proof}[Proof of Theorem~\ref{extreme}---part 1] Making the same choices  $x=(\log T)/(2\log_2 T)$  and $\ell=[\log_2 T]$ as in the case $\sigma=1$ and returning to \eqref{small2}, we see that Lemma~\ref{psum} gives that
\begin{align*} \sum_{mk=n}\frac{r(m) \overline{r(n)}}{k^{\sigma}}& \Big/\sum_{n\in \mathcal{M}} |r(n)|^2\\
& \ge \exp\left( \sigma \log_3 T + \frac{2^{\sigma-1}(\log T)^{1-\sigma}}{(1-\sigma)(\log_2 T)^{\sigma}}-E(T,\sigma) \right), \end{align*}
where
\[ E(T,\sigma)  \le C +\frac{(1+\delta)\log_3 T\; (\log T)^{1-\sigma}}{(1-\sigma)(\log_2 T)^{\sigma+1}} \]
for arbitrary $\delta>0$ when $T$ is sufficiently large. Returning to \eqref{plain}, \eqref{Sound1}, and \eqref{Sound2}, we now obtain \eqref{intermediate2} and the desired asymptotic behavior of $\nu(\sigma)$ when $\sigma\nearrow 1$ because
 \[ \frac{\log_3 T}{\log_2 T}\le (1-\sigma)|\log(1-\sigma)| \]
when $\log_2 T\ge e$, by our a priori assumption that $\sigma\le 1-1/\log_2 T$. We also get the uniform lower bound
$\nu(\sigma)\ge 1/(2-2\sigma)$ because $2^{\sigma-1}>1/2$ when $\sigma\ge 1/2$ and $\log_3 T/\log_2 T \to 0$ when $T\to\infty$.
\end{proof}

\subsection{The case $1/2+1/\log_2 T \le \sigma \le 3/4 $}\label{half}
In view of the preceding section, we already know that \eqref{intermediate2} holds for large $T$ when we choose $\nu(\sigma)$ to be an appropriate function bounded below by $1/(2-2\sigma)$. This is just because the interval $[T/2,T]$ is shorter than $[\sqrt{T},T]$ when $T\ge 4$. What remains is therefore to prove that $\nu(\sigma)$  can chosen such that it also has the desired asymptotic  behavior when $\sigma\searrow 1/2$, while \eqref{intermediate2} still holds for large $T$.

We will use a construction from \cite[Section 3]{BS2} which one should understand as a ``reversion'' of an application of the Cauchy--Schwarz inequality in \cite{BS}. This key step in bounding G\'{a}l-type sums from above when $\sigma=1/2$ relies on the existence of so-called divisor closed extremal sets of square-free numbers and a certain completeness property enjoyed by such sets. The interested reader is advised to consult \cite{BS} to see the close connection between our construction and the proof given in that paper.

We recall that, in view of Lemma~\ref{m2lemD}, our goal is to find a multiplicative function $f(n)$ (depending on $\sigma$)  and an associated set of integers $\mathcal{M}$ with $|\mathcal{M}|\le \sqrt{T}$ such that
\begin{equation}\label{toprove}
\sum_{n\in\mathcal{M}, mk=n, k\le M}\frac{f(n)f(m)}{k^\sigma} \ge W(T,\sigma) \times \sum_{n\in \mathcal{M}} f(n)^2 \end{equation}
for suitable values of $M$, depending on $T$, where
\[  W(T,\sigma)=\begin{cases} \exp\left(c\sqrt{\frac{\log T \log_3 T}{\log_2 T}}\right), & \sigma=1/2 \\
\exp\left(\nu(\sigma)\frac{(\log T)^{1-\sigma}}{(\log_2 T)^{\sigma}}\right), & 1/2+1/\log_2 T\le \sigma \le 3/4 \end{cases} \]
and $0<c<1/2$. This was done for $\sigma=1/2$ and $M\ge N^\varepsilon$ for every $\varepsilon>0$ in \cite[Section 3]{BS2}. We will now extend this construction to the range $1/2+1/\log_2 T\le \sigma\le 3/4$, and we will show that we can allow much smaller values of $M$. Since we already obtained the lower bound $\nu(\sigma)\ge 1/(2-2\sigma)$ in the preceding subsection, we will mainly be interested in estimates for $\nu(\sigma)$ when $\sigma$ is sufficiently close to $1/2$.

We begin with the construction of $f(n)$ and $\mathcal{M}$ when $1/2+1/\log_2 T\le \sigma\le 3/4$.  We  follow the scheme in \cite[Section 3]{BS2} word for word, the only essential difference being that we let $P$ be the set of all primes $p$ such that
\[ e\log N\log_2 N< p \le \log N\exp( (2\sigma-1)^{-\alpha})\log_2 N \]
for a suitable $\alpha$, $0<\alpha<1$, and set
\[ f(p):=\frac{(\log N \log_2N)^{1-\sigma}}{\sqrt{|\log(2\sigma-1)|}} \cdot  \frac{1}{p^{1-\sigma}(\log p-\log_2N-\log_3N)},\]
where $N=[\sqrt{T}]$. This defines a multiplicative function $f(n)$, if we require it to be supported on the square-free numbers with prime factors in $P$.
Arguing as in \cite[Section 3]{BS2}, we are now led to consider the quantity
\[
A(N,\sigma):=\frac{1}{\sum_{j\in\N}f(j)^2}\sum_{n\in\N}\frac{f(n)}{n^\sigma}\sum_{d|n}f(d)d^\sigma=\prod_{p\in P}\frac{1+f(p)^2+f(p)p^{-\sigma}}{1+f(p)^2}.
\]
The following estimate, which is the counterpart to \cite[Lemma~1]{BS2} for the case $\sigma=1/2$, is of basic importance.
\begin{lem}\label{lem1}
We have
\begin{equation} \label{annew} A(N,\sigma)\ge \exp\left((\alpha+o(1))\frac{|\log(2\sigma-1)|^{3/2}}{1+|\log(2\sigma-1)|}\frac{(\log N)^{1-\sigma}}{(\log_2 N)^{\sigma}}\right) \end{equation}
uniformly for $1/2+1/\log_2 T \le \sigma \le 3/4$ when $T\to\infty$.
\end{lem}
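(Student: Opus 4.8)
The plan is to pass to logarithms and reduce \eqref{annew} to a sharp, uniform lower bound for a single sum over the primes in $P$; the argument parallels that of \cite[Lemma~1]{BS2}. Writing $L:=\log N\log_2 N$ (so that $\log_2 N+\log_3 N=\log L$ and hence $\log p-\log_2 N-\log_3 N=\log(p/L)$ for $p\in P$), and using the expression for $A(N,\sigma)$ recorded just above the statement, we have
\[ \log A(N,\sigma)=\sum_{p\in P}\log\left(1+\frac{f(p)p^{-\sigma}}{1+f(p)^{2}}\right). \]
The first step is to control $f(p)$ on $P$. The defining inequalities $p>eL$ and $\log(p/L)>1$ give at once $f(p)<e^{\sigma-1}|\log(2\sigma-1)|^{-1/2}$ and $f(p)p^{-\sigma}<e^{-1}L^{-\sigma}|\log(2\sigma-1)|^{-1/2}$, both uniformly in $\sigma$ and in $p\in P$. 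Since $\sigma\le 3/4$ we get $f(p)^{2}<1/|\log(2\sigma-1)|$, hence $1/(1+f(p)^{2})>|\log(2\sigma-1)|/(1+|\log(2\sigma-1)|)$; and since $L\to\infty$ and $|\log(2\sigma-1)|\ge\log 2$, we get $f(p)p^{-\sigma}\to 0$ uniformly as $T\to\infty$. Using $\log(1+x)\ge x-x^{2}/2\ge x(1-x/2)$ for $x\ge 0$, I would therefore obtain
\[ \log A(N,\sigma)\ge(1+o(1))\,\frac{|\log(2\sigma-1)|}{1+|\log(2\sigma-1)|}\sum_{p\in P}f(p)p^{-\sigma}, \]
with the $o(1)$ uniform in $\sigma$.

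The core of the proof is the evaluation of the remaining sum. Since $p^{-\sigma}/p^{1-\sigma}=p^{-1}$, we have $\sum_{p\in P}f(p)p^{-\sigma}=L^{1-\sigma}|\log(2\sigma-1)|^{-1/2}\,\Sigma$, where
\[ \Sigma:=\sum_{p\in P}\frac{1}{p\log(p/L)},\qquad P=\{p:\ eL<p\le Le^{R}\},\qquad R:=(2\sigma-1)^{-\alpha}. \]
I claim that $\Sigma=(1+o(1))(\log R)/\log L$ uniformly in $\sigma$. To prove this I would use Mertens' theorem with the error term furnished by the prime number theorem, $\sum_{p\le t}p^{-1}=\log\log t+M+O(e^{-c\sqrt{\log t}})$, so that $\sum_{eL<p\le t}p^{-1}=\log\log t-\log\log(eL)+\eta(t)$ with $\eta(eL)=0$ and $\eta(t)=O(e^{-c\sqrt{\log L}})$ uniformly for $t\ge eL$. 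Partial summation against $1/\log(t/L)$ splits $\Sigma$ into a contribution from $d\eta$, which is $O(e^{-c\sqrt{\log L}})$ because $\int_{1}^{R}s^{-2}\,ds\le 1$, and a main contribution $\int_{eL}^{Le^{R}}\frac{dt}{t\log t\,\log(t/L)}$, which after the substitution $t=Le^{s}$ becomes $\int_{1}^{R}\frac{ds}{(\log L+s)s}$. Here I would invoke the decisive fact that $R=(2\sigma-1)^{-\alpha}\le(\log_2 T)^{\alpha}=o(\log L)$ uniformly in $\sigma$ (because $\alpha<1$ and $\log L=\log_2 N+\log_3 N\sim\log_2 N$), so that $\log L+s=\log L(1+o(1))$ throughout $[1,R]$ and the integral equals $(1+o(1))(\log R)/\log L$. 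Finally, since $R\ge 2^{\alpha}$ forces $\log R\ge\alpha\log 2>0$, the exponentially small error is negligible against the main term, uniformly, which establishes the claim.

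It then remains only to assemble the pieces. Using $L^{1-\sigma}=(\log N)^{1-\sigma}(\log_2 N)^{1-\sigma}$, $\log L\sim\log_2 N$, and $\log R=\alpha|\log(2\sigma-1)|$, the two displays of the first paragraph together with the estimate for $\Sigma$ give
\[ \sum_{p\in P}f(p)p^{-\sigma}=(\alpha+o(1))\,|\log(2\sigma-1)|^{1/2}\,\frac{(\log N)^{1-\sigma}}{(\log_2 N)^{\sigma}}, \]
and feeding this into the lower bound for $\log A(N,\sigma)$ and exponentiating yields exactly \eqref{annew}, with all error terms uniform in $1/2+1/\log_2 T\le\sigma\le 3/4$. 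The main obstacle is the uniform prime‑sum estimate of the second paragraph: one must treat simultaneously the regime $\sigma\searrow 1/2$, where $R$ grows (slowly) and $|\log(2\sigma-1)|\to\infty$, and the regime where $\sigma$ stays away from $1/2$, where $R$ and $|\log(2\sigma-1)|$ are bounded and the Mertens error is of the same crude order as the target main term, forcing the use of the prime number theorem error bound. What reconciles the two regimes is precisely that $R$ remains of smaller order than $\log L\sim\log_2 T$, which is guaranteed by the hypothesis $\alpha<1$ together with the upper cutoff in the definition of $P$.
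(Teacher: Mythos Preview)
Your proof is correct and follows essentially the same route as the paper's: both pass from $\log A(N,\sigma)$ to $\sum_{p\in P}f(p)p^{-\sigma}$ via the uniform bound $f(p)^2<1/|\log(2\sigma-1)|$, then evaluate the resulting prime sum $\sum_{p\in P}1/(p\log(p/L))$ by the prime number theorem and a change of variables to obtain $(\alpha+o(1))|\log(2\sigma-1)|/\log_2 N$. Your version is somewhat more explicit about the error control (Mertens with PNT remainder plus partial summation, and the observation that $R=o(\log L)$ because $\alpha<1$), but the substance is the same.
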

\begin{proof}
Since $f(p)<1/\sqrt{|\log(2\sigma-1)|}$ for every $p$ in $P$, we find that
\begin{equation}\label{return}
A(N,\sigma)\ge\exp\left((1+o(1))\frac{|\log(2\sigma-1)|}{1+|\log(2\sigma-1)|}\sum_{p\in P}f(p)p^{-\sigma}\right).
\end{equation}
Here and in what follows, the error term goes to $0$
when $T\to\infty$ uniformly for $1/2+1/\log_2 T \le \sigma \le 3/4$.
Now we obtain that
\begin{align*}
\sum_{p\in P}f(p) p^{-\sigma} &=\frac{1}{\sqrt{|\log(2\sigma-1)|}} \cdot (\log N \log_2N)^{1-\sigma} \\& \times \sum_{p\in P}\frac{1}{p(\log p-\log_2N-\log_3N)},
\end{align*}
and
\begin{align*}
\sum_{p\in P}&\ \frac{1}{p(\log p-\log_2N-\log_3N)} \\
& =(1+o(1))\int_{e\log N\log_2 N}^{\log N\exp( (2\sigma-1)^{-\alpha})\log_2 N}\frac{dx}{x\log x(\log x-\log_2N-\log_3N)} \\
& =(1+o(1))\int_{1+\log_2N+\log_3N}^{\log_2N+(2\sigma-1)^{-\alpha}+\log_3N}\frac{dt}{t(t-\log_2N-\log_3N)} \\
& =(\alpha+o(1))\frac{|\log(2\sigma-1)|}{\log_2 N}.
\end{align*}
Returning to \eqref{return}, we obtain the desired estimate \eqref{annew}.
\end{proof}

We proceed next to choose our set $\mathcal{M}$. To this end, we let $P_k$ be the set of all primes $p$ such that $e^k\log N\log_2N<p\le e^{k+1}\log N\log_2N$ for $k=1,\ldots,[(2\sigma-1)^{-\alpha}]$.
Fix $1 < a < 1/\alpha$. Then let $M_k$ be the set of integers that have at least $\frac{a\log N}{k^2|\log(2\sigma-1)|}$ prime divisors in $P_k$,
and let $M'_k$ be the set of integers from $M_k$ that have prime divisors only in $P_k$.
Finally, set
\[ \mathcal{M}:=\supp(f)\setminus\bigcup_{k=1}^{[(2\sigma-1)^{-\alpha}]}M_k.\]

We need to show that we have the bound $|\mathcal{M}|\le N$ if $N$ is large and  $\alpha$ and $a$ have been chosen appropriately. As in \cite[Section 3]{BS2}, we start from the facts that
\begin{equation} \label{bin1}
\binom{m}{n}\le \exp\left(n(\log m-\log n)+n+\log m\right)
\end{equation}
holds when $n\le m$ and $m$ is large enough and that
\begin{equation} \label{bin2}
\frac{\binom{m}{n}}{\binom{m}{n-1}}=\frac{m-n+1}{n} \ge 2
\end{equation}
when $m\ge 3n-1$.
By the prime number theorem, the cardinality of each $P_k$ is at most $e^{k+1}\log N$, and we therefore get, using first
\eqref{bin2} and then \eqref{bin1},
that
\begin{align*}
|\mathcal{M}| & \le\prod_{k=1}^{[(2\sigma-1)^{-\alpha}]}\sum_{j=0}^{\big[\frac{a\log N}{k^2|\log(2\sigma-1)|}\big]} \binom{\big[e^{k+1}\log N\big]}{j}
  \le \prod_{k=1}^{[(2\sigma-1)^{-\alpha}]} 2 \binom{\big[e^{k+1}\log N\big]}{\big[\frac{a\log N}{k^2|\log(2\sigma-1)|}\big]} \\
& \le\exp\Big(\sum_{k=1}^{[(2\sigma-1)^{-\alpha}]}\Big(1+\frac{a \log N \big(k+2+\log|\log(2\sigma-1)|+2\log k\big)}{k^2|\log(2\sigma-1)|}\\ & \qquad \qquad \qquad \qquad \quad +k+1+\log_2N\Big)\Big).
\end{align*}
Hence, choosing $\alpha$ and $a$ suitably, depending on $\sigma$, we have that $|\mathcal{M}|\le N$ for all $N$ large enough. In fact, we notice that the closer $\sigma$ is to $1/2$, the closer to $1$ we can choose $\alpha$ and hence also $a$.

We have now identified the desired set $\mathcal{M}$. The proof of the next lemma shows that we can choose $\alpha$ and $a$ such that $f(n)$ is essentially concentrated on this set.  Here we use the following terminology: A set of positive integers $\mathcal{M}$ is said to be divisor closed if $d$ is in $\mathcal{M}$ whenever $m$ is in $\mathcal{M}$ and $d$ divides $m$. Note that this lemma is the counterpart to \cite[Lemma~2]{BS2} for the case $\sigma=1/2$.
\begin{lem}
\label{lem2} We can choose $\alpha$ depending on $\sigma$, with $\alpha\nearrow 1$ when $\sigma\searrow 1/2$, such that there exists a divisor closed set of integers $\mathcal{M}$ of cardinality at most $N$ and the following estimate holds:
\begin{equation}
\label{i2new}
\frac{1}{\sum_{j\in\N}f(j)^2}\sum_{n\in\N,n\not\in\mathcal{M}}\frac{f(n)}{n^\sigma}\sum_{d|n}f(d)d^\sigma=o(A(N,\sigma)),\quad N\to\infty.
\end{equation}
This estimate is uniform in $\sigma$ for $1/2+1/\log_2 T\le \sigma\le 3/4$.
\end{lem}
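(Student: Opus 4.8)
The plan is to keep the set $\mathcal{M}$ exactly as constructed above and verify its properties directly; the only freedom is how $\alpha$, and with it $a$, depends on $\sigma$. First, $\mathcal{M}$ is divisor closed for free: if $n\in\mathcal{M}$ and $d\mid n$, then $d$ is square-free with all prime factors in $P$, so $d\in\supp(f)$, and for every $k$ the number of prime factors of $d$ lying in $P_k$ is at most that of $n$, so $d\notin M_k$; hence $d\in\mathcal{M}$. The bound $|\mathcal{M}|\le N$ has already been arranged; as the cardinality estimate shows, it amounts to $a\alpha<1$, i.e. $1<a<1/\alpha$, and one further mild restriction on $\alpha=\alpha(\sigma)$ will be imposed at the end.

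For \eqref{i2new}, write $w(p):=f(p)^2+f(p)p^{-\sigma}$ and extend $w$ multiplicatively, so that $n^{-\sigma}f(n)\sum_{d\mid n}f(d)d^\sigma=w(n)=\prod_{p\mid n}w(p)$ for $n\in\supp(f)$ and $\sum_n w(n)=A(N,\sigma)\sum_j f(j)^2$. Since $n\notin\mathcal{M}$ forces $n\in M_k$ for some $k$, I would factor each such $n$ as $n=n_1n_2$, with $n_1$ built from the primes of $P_k$ --- so $n\in M_k$ iff $n_1\in M'_k$ --- and $n_2$ from those of $P\setminus P_k$, and use multiplicativity of $w$ together with $\sum_{n_2}w(n_2)=\prod_{p\in P\setminus P_k}(1+w(p))$ and $\sum_j f(j)^2=\prod_{p\in P}(1+f(p)^2)$ to obtain
\[
\frac{1}{\sum_j f(j)^2}\sum_{\substack{n\in\N\\ n\notin\mathcal{M}}}w(n)\ \le\ A(N,\sigma)\sum_{k=1}^{[(2\sigma-1)^{-\alpha}]}\frac{\sum_{n_1\in M'_k}w(n_1)}{\prod_{p\in P_k}(1+w(p))}.
\]
It then suffices to prove that the sum over $k$ is $o(1)$ uniformly in $\sigma$.

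The $k$-th quotient equals $\mathbb{P}\bigl(\sum_{p\in P_k}X_p\ge j_k\bigr)$, where $j_k:=\lceil a\log N/(k^2|\log(2\sigma-1)|)\rceil$ and the $X_p$ are independent Bernoulli variables with $\mathbb{P}(X_p=1)=w(p)/(1+w(p))$; so I would invoke the exponential-moment (Chernoff) inequality. With $\mu_k:=\sum_{p\in P_k}w(p)/(1+w(p))\le\lambda_k:=\sum_{p\in P_k}w(p)$ and $\varphi(x):=x\log x-x+1$, this gives, once $j_k>\lambda_k$,
\[
\frac{\sum_{n_1\in M'_k}w(n_1)}{\prod_{p\in P_k}(1+w(p))}\ \le\ \exp\!\bigl(-\mu_k\varphi(j_k/\mu_k)\bigr)\ \le\ \exp\!\bigl(-\lambda_k\varphi(j_k/\lambda_k)\bigr),
\]
the last step because $x\mapsto x\varphi(t/x)$ decreases on $(0,t)$. (One may equally run the argument through the binomial estimates \eqref{bin1}--\eqref{bin2}, as for $\sigma=1/2$ in \cite{BS2}.) Next I would estimate $\lambda_k$ by the prime number theorem, using $f(p)=\bigl(|\log(2\sigma-1)|^{1/2}\,\theta\,e^{\theta(1-\sigma)}\bigr)^{-1}$ with $\theta:=\log p-\log_2 N-\log_3 N\in(k,k+1]$ for $p\in P_k$, the contribution of $f(p)p^{-\sigma}$ being of lower order:
\[
\lambda_k\ \le\ (1+o(1))\,\frac{\log N}{|\log(2\sigma-1)|}\int_k^{k+1}\frac{e^{\theta(2\sigma-1)}}{\theta^2}\,d\theta\ \le\ (1+o(1))\,\frac{e^{(k+1)(2\sigma-1)}\log N}{|\log(2\sigma-1)|\,k(k+1)},\qquad \lambda_k\ \gtrsim\ \frac{\log N}{|\log(2\sigma-1)|\,k^2},
\]
so that $j_k/\lambda_k\ge(1-o(1))\,a\,(1+1/k)\,e^{-(k+1)(2\sigma-1)}$, with $(k+1)(2\sigma-1)\le2(2\sigma-1)^{1-\alpha}$ throughout the admissible range of $k$.

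Finally I would choose $\alpha=\alpha(\sigma)\nearrow1$ as $\sigma\searrow1/2$. The requirements are: $a\alpha<1$ (cardinality); $(1-\alpha)|\log(2\sigma-1)|\to\infty$ as $\sigma\to1/2$, so that $(2\sigma-1)^{1-\alpha}\to0$ and hence $j_k/\lambda_k\ge\rho_0(\sigma)$ for some $\rho_0(\sigma)>1$; and enough room left in $a-1$, which is at most $1/\alpha-1\asymp1-\alpha$, that $\rho_0(\sigma)-1\gtrsim1-\alpha$. A slow choice such as $1-\alpha(\sigma)\asymp1/\log|\log(2\sigma-1)|$, with $a(\sigma)$ an admissible value proportional to $1/\alpha(\sigma)-1$, meets all three. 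Then $\varphi(\rho_0(\sigma))\gtrsim(1-\alpha)^2$, while $\lambda_k\gtrsim(2\sigma-1)^{2\alpha}\log N/|\log(2\sigma-1)|$, so, using $2\sigma-1\ge2/\log_2 T$, $|\log(2\sigma-1)|\le\log_3 T$ and $\log N\asymp\log T$, the exponent $\lambda_k\varphi(j_k/\lambda_k)\ge\lambda_k\varphi(\rho_0)$ tends to infinity at a rate uniform in $\sigma$; summing $\exp(-\lambda_k\varphi(\rho_0))$ over the at most $(2\sigma-1)^{-\alpha}\le\log_2 N$ values of $k$ still leaves $o(1)$, which is \eqref{i2new}. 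The main difficulty is this last step: tuning the single parameter $\alpha(\sigma)$ --- hence $a(\sigma)$ --- so that $|\mathcal{M}|\le N$ and the concentration estimate hold simultaneously, with all error terms uniform for $1/2+1/\log_2 T\le\sigma\le3/4$; this is the bookkeeping that the resonance method's selection of smooth numbers forces upon us.
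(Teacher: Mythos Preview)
Your proposal is correct and follows essentially the same route as the paper: reduce via multiplicativity to the layers $M'_k$ and then apply a Chernoff/Rankin tail bound with parameters to be tuned so that the exponent is negative. The only cosmetic difference is that you work directly with the multiplicative weight $w(p)=f(p)^2+f(p)p^{-\sigma}$ and phrase the bound probabilistically, whereas the paper first splits off the factor $\prod_{p\in P_k}\bigl(1+1/(f(p)p^{\sigma})\bigr)$, shows it is negligible, and then applies Rankin's trick with a free parameter $b$ to $\sum_{n\in M'_k}f(n)^2$; your explicit choice $1-\alpha(\sigma)\asymp 1/\log|\log(2\sigma-1)|$ is only meaningful for $\sigma$ near $1/2$, but (as the paper also notes) any fixed small $\alpha$ works once $\sigma$ is bounded away from $1/2$.
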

\begin{proof}
We use the set $\mathcal{M}$ constructed above. We have already seen that we can in a suitable way let $\alpha\nearrow 1$ when $\sigma\searrow 1/2$. To prove the desired estimate \eqref{i2new}, we begin by noting that
\begin{align} \nonumber
& \frac{1}{A(N,\sigma)\sum_{j\in\N}f(j)^2} \sum_{n\in\N,n\not\in\mathcal{M}}\frac{f(n)}{n^\sigma}\sum_{d|n}f(d)d^{\sigma} \\
&\qquad \qquad \le\frac{1}{A(N,\sigma)\sum_{j\in\N}f(j)^2}\sum_{k=1}^{[(2\sigma-1)^{\alpha}]}\sum_{n\in M_k}\frac{f(n)}{n^{\sigma}}\sum_{d|n}f(d)d^{\sigma}. \label{i2.5}
\end{align}
Now for each $k=1,\ldots,[(2\sigma-1)^{-\alpha})]$, we have that
\begin{align} \nonumber
&\frac{1}{A(N,\sigma)\sum_{j\in\N}f(j)^2}\sum_{n\in M_k}\frac{f(n)}{n^{\sigma}}\sum_{d|n}f(d)d^{\sigma} \\ & \qquad \quad=\frac{1}{\prod_{p\in P_k}(1+f(p)^2+f(p)p^{-\sigma})}\sum_{n\in M'_k}\frac{f(n)}{n^{\sigma}}\sum_{d|n}f(d)d^{\sigma} \nonumber \\
\label{i3}
&\qquad  \quad \le\frac{1}{\prod_{p\in P_k}(1+f(p)^2)}\sum_{n\in M'_k}f(n)^2\prod_{p\in P_k}\left(1+\frac{1}{f(p)p^{\sigma}}\right).
\end{align}
To deal with the product to the right in \eqref{i3}, we make the following computation:
\begin{align}
\nonumber
\prod_{p\in P_k}& \left(1+\frac{1}{f(p)p^{\sigma}}\right)\\& =\prod_{p\in P_k}\left(1+\frac{\sqrt{|\log(2\sigma-1)|}}{(\log N\log_2 N)^{1-\sigma}}p^{1-2\sigma}(\log p-\log_2N-\log_3N)\right) \nonumber
\\
\nonumber
& \le\left(1+(k+1)e^{k(1-2\sigma)}\sqrt{|\log(2\sigma-1)|}(\log N\log_2 N)^{-\sigma}\right)^{e^{k+1}\log N}\\
&\le \exp\left((k+1)e^{k(2-2\sigma)+1}\sqrt{|\log(2\sigma-1)|}(\log N)^{1-\sigma}(\log_2 N)^{-\sigma} \right)\nonumber\\
&=\exp\left(o\left(\frac{\log N}{|\log(2\sigma-1)|}\right)\frac{1}{k^2}\right), \label{i4}
\end{align}
where the latter relation holds simply because $k\le (2\sigma-1)^{-\alpha}$. Since every number in $M_k'$ has at least
$\frac{a\log N}{k^2 |\log(2\sigma-1)|}$ prime divisors and $f(n)$ is a multiplicative function, it  follows that
\[
\sum_{n\in M'_k}f(n)^2\le   b^{-a\frac{\log N}{k^2|\log(2\sigma-1)|}}\prod_{p\in P_k}(1+bf(p)^2)
\]
whenever $b> 1$ and hence
\begin{equation}
\label{i5}
\frac{\sum_{n\in M'_k}f(n)^2}{\prod_{p\in P_k}(1+f(p)^2)}\le b^{-a\frac{\log N}{k^2|\log(2\sigma-1)|}}\exp\left(\sum_{p\in P_k}(b-1) f(p)^2\right).
\end{equation}
Finally,
\begin{align*}
\sum_{p\in P_k} f(p)^2& =\frac{(\log N \log_2N)^{(2-2\sigma)}}{|\log(2\sigma-1)|}\sum_{p\in P_k}\frac{1}{p^{2-2\sigma}(\log p-\log_2N-\log_3N)^2}
\\
& \le (1+o(1))\frac{(\log N \log_2N)^{(2-2\sigma)}}{|\log(2\sigma-1)|}\int_{e^k\log N\log_2 N}^{e^{k+1}\log N\log_2 N}\frac{dx}{k^2x^{2-2\sigma}\log x} \\
& \le (1+o(1))\frac{\log N}{k^2|\log(2\sigma-1)|}\frac{e^{2\sigma-1}-1}{2\sigma-1}e^{k(2\sigma-1)}.
\end{align*}
Combining the last inequality with~\eqref{i5} and~\eqref{i4}, we get that~\eqref{i3} is at most
\[
\exp\left(\Big(\frac{e^{2\sigma-1}-1}{2\sigma-1}e^{k(2\sigma-1)}(b-1)-a \log b +o(1)\Big)\frac{\log N}{k^2|\log(2\sigma-1)|}\right).
\]
We see that when $\sigma$ is close to $1/2$, the factor in front of $b-1$ is close to $1$. In this case, we may therefore choose both $\alpha$ and $a$ close to $1$ and then $b$ close to $1$, to arrange it so that
\[ (b-1)\frac{e^{2\sigma-1}-1}{2\sigma-1}e^{k(2\sigma-1)}-a \log b < 0; \]
the latter inequality can of course be obtained trivially for all values of $\sigma$ if we choose $\alpha$, $a$, $b$ appropriately. Returning to~\eqref{i2.5}, we therefore see that the desired relation~\eqref{i2new} has been established, as well as the asymptotic relation between $\sigma$ and $\alpha$.
\end{proof}

It remains to make the additional restriction $k\le M$ in \eqref{toprove}. The next lemma addresses this point and proves a result which is much stronger than what we need to finish the proof of Theorem~\ref{extreme}. In fact, this lemma is what we would need to prove the counterpart to Theorem~\ref{part} for $1/2+1/\log_2 T<\sigma \le 3/4.$
\begin{lem}
\label{lem8}
Let $\mathcal{M}$ be as defined above. Then
\begin{equation}
\label{eq8}
\frac{1}{\sum_{j\in\N}f(j)^2}\sum_{n\in\mathcal{M}}\frac{f(n)}{{n^\sigma}}\sum_{d|n,\,d\le n/M}f(d){d^\sigma}=o(A(N,\sigma)),\quad N\to\infty
\end{equation}
uniformly for $1/2+1/\log_2 T\le \sigma \le 3/4$, where
\[ M:=\exp\big(e (\sqrt{|\log(2\sigma-1)|}+3)(\log N \log_2 N)^{1-\sigma}\big). \]
\end{lem}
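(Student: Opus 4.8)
The plan is to reduce \eqref{eq8} to a tail estimate for a single multiplicative sum, and then dispose of that tail by Rankin's trick, comparing against the lower bound for $A(N,\sigma)$ obtained in the proof of Lemma~\ref{lem1}.

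First I would use that $f$ is multiplicative and supported on square-free integers: for $n\in\supp(f)$ and $d\mid n$ we have $f(d)=f(n)/f(n/d)$, so, writing $e=n/d$ and noting that $d\le n/M$ is equivalent to $e\ge M$,
\[ \frac{f(n)}{n^\sigma}\sum_{d\mid n,\ d\le n/M}f(d)d^\sigma=f(n)^2\sum_{e\mid n,\ e\ge M}\frac{1}{f(e)e^\sigma}. \]
Summing over $n\in\mathcal{M}$, interchanging the order of summation and using the crude bound $\sum_{n\in\mathcal{M},\,e\mid n}f(n)^2\le\sum_{n\in\supp(f),\,e\mid n}f(n)^2=f(e)^2\sum_{j\in\N}f(j)^2$, I find that the left-hand side of \eqref{eq8} is at most $\sum_{e\ge M,\,e\in\supp(f)}f(e)e^{-\sigma}$. (Here the only feature of $\mathcal{M}$ used is $\mathcal{M}\subseteq\supp(f)$.) So it suffices to prove that $\sum_{e\ge M,\,e\in\supp(f)}f(e)e^{-\sigma}=o(A(N,\sigma))$ as $N\to\infty$, uniformly in $\sigma$.

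Next I would fix $\varepsilon_0$ a small absolute constant, let $p_{\max}$ be the largest prime in $P$ (so $\log p_{\max}=\log_2 N+\log_3 N+(2\sigma-1)^{-\alpha}$), and set $\delta:=\varepsilon_0/\log p_{\max}$, which makes $p^\delta\le p_{\max}^\delta=e^{\varepsilon_0}$ for every $p\in P$. Rankin's trick then gives
\[ \sum_{e\ge M,\,e\in\supp(f)}\frac{f(e)}{e^\sigma}\le M^{-\delta}\prod_{p\in P}\bigl(1+f(p)p^{\delta-\sigma}\bigr)\le M^{-\delta}\exp\Bigl(e^{\varepsilon_0}\sum_{p\in P}f(p)p^{-\sigma}\Bigr). \]
To compare this with $A(N,\sigma)$ I would invoke the estimates from the proof of Lemma~\ref{lem1}, namely inequality \eqref{return} and the computation there that $\sum_{p\in P}f(p)p^{-\sigma}=(\alpha+o(1))\sqrt{|\log(2\sigma-1)|}\,(\log N)^{1-\sigma}(\log_2 N)^{-\sigma}$, both uniform in $\sigma$. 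Writing $L:=(\log N)^{1-\sigma}(\log_2 N)^{-\sigma}$ (so $L\to\infty$ uniformly) and $B:=\sqrt{|\log(2\sigma-1)|}\ge\sqrt{\log 2}$, and using that $2\sigma-1\ge 2/\log_2 T$ forces $(2\sigma-1)^{-\alpha}\le(2\sigma-1)^{-1}\le(1/2+o(1))\log_2 N$, hence $\log p_{\max}\le(3/2+o(1))\log_2 N$, together with $\log M=e(B+3)(\log N\log_2 N)^{1-\sigma}=e(B+3)L\log_2 N$, I get $\delta\log M\ge\tfrac23\varepsilon_0 e(B+3)L\,(1-o(1))$ and therefore
\[ \log\!\Bigl(\sum_{e\ge M,\,e\in\supp(f)}\frac{f(e)}{e^\sigma}\Bigr)-\log A(N,\sigma)\le L\Bigl[-\tfrac23\varepsilon_0 e(B+3)+\alpha B\bigl(e^{\varepsilon_0}-1+\tfrac{1}{1+B^2}\bigr)\Bigr]+o(BL). \]
Using $\alpha<1$ and $B/(1+B^2)\le 1/2$, a direct check shows that for a suitable small absolute constant $\varepsilon_0$ (for instance $\varepsilon_0=1/4$) the bracket is at most $-c_1B-c_2$ with absolute constants $c_1,c_2>0$; since $L\to\infty$ uniformly, the displayed quantity tends to $-\infty$ uniformly in $\sigma$, which is exactly what is needed.

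The routine parts are the multiplicative bookkeeping of the first step and the prime-number-theorem estimates for the sums over $P$, which are essentially those already carried out in the proof of Lemma~\ref{lem1}. The hard part, I expect, will be the last step: verifying that the constant $e$ (and the ``$+3$'') in the definition of $M$ really leave enough room for the bracket above to be negative by an absolute amount, and—more delicately—checking that all the $o(1)$ terms are uniform over $1/2+1/\log_2 T\le\sigma\le 3/4$, in particular near the left endpoint, where $\alpha$ is close to $1$ and $(2\sigma-1)^{-\alpha}$ is a non-negligible fraction of $\log_2 N$. It is worth stressing that one cannot bound $\sum_{e\ge M}f(e)e^{-\sigma}$ by $o(1)$: this tail is itself of the same exponential order as $A(N,\sigma)$, and the argument goes through only because the lower bound \eqref{return} for $\log A(N,\sigma)$ outweighs the loss incurred in Rankin's inequality.
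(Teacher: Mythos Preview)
Your argument is correct (the ``$=$'' in $\sum_{n\in\supp(f),\,e\mid n}f(n)^2=f(e)^2\sum_jf(j)^2$ should be ``$\le$'', since the left side is $f(e)^2\prod_{p\nmid e}(1+f(p)^2)$, but your conclusion only needs the inequality), and the numerical check with $\varepsilon_0=1/4$ does give a uniformly negative bracket. However, your route is genuinely different from the paper's. After the same first rewriting, the paper does \emph{not} interchange the order of summation; instead it fixes $n\in\mathcal{M}$ and bounds the inner sum $\sum_{k\mid n,\,k\ge M}1/(f(k)k^\sigma)$ by a quantity $<1$ via Rankin's trick applied to the \emph{reciprocals} $1/(f(p)p^\sigma)$. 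The point is that the definition of $\mathcal{M}$ caps the number of prime divisors of $n$ in each band $P_k$, which yields directly $\sum_{p\mid n}1/(f(p)p^\sigma)\le a\alpha(\sqrt{|\log(2\sigma-1)|}+3)(\log N)^{1-\sigma}(\log_2 N)^{-\sigma}$; with $\delta=1/\log_2 N$ this exactly cancels against $M^{-\delta}$ and leaves a factor $<1$ as soon as $a\alpha<1$. So the paper explains the constants $e$ and ``$+3$'' in $M$ as arising from the structure of $\mathcal{M}$, and proves the stronger statement that the left side of \eqref{eq8} is bounded (not merely $o(A)$). Your approach, by contrast, uses nothing about $\mathcal{M}$ beyond $\mathcal{M}\subseteq\supp(f)$ and instead buys the result by feeding in the lower bound \eqref{return} for $A(N,\sigma)$; this is slicker in that it avoids the bookkeeping over the bands $P_k$, but it yields only $o(A)$ and, as you rightly note, the tail you bound is itself of the same exponential order as $A(N,\sigma)$, so the comparison with \eqref{return} is essential rather than cosmetic.
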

We notice that here we only need that $1<a<1/\alpha$. It may also be observed that with some extra effort one may replace $e$ by a somewhat smaller constant  in the definition of $M$.
\begin{proof}[Proof of Lemma~\ref{lem8}]
To begin with, we observe that
\[
\sum_{n\in\mathcal{M}}\frac{f(n)}{n^\sigma}\sum_{d|n,\,d\le n/M}f(d)d^\sigma=
\sum_{n\in\mathcal{M}}f(n)^2\sum_{k|n,\,k\ge M}\frac{1}{f(k)k^\sigma}.
\]
It is therefore enough to show that for each $n$ in $\mathcal{M}$ we have
\[
\sum_{k|n,\,k\ge M}\frac{1}{f(k)k^\sigma}=o(A(N,\sigma)),\quad N\to\infty.
\]
Multiplying and dividing the $k$th term by $k^{-\delta}$ and using that $f(k)$ is a multi- plicative function\footnote{This estimation technique is known as Rankin's trick.},  we deduce that
\begin{align*}
\sum_{k|n,\,k\ge M}\frac{1}{f(k)k^\sigma} & \le M^{-\delta}\prod_{ p|n}\left(1+\frac{1}{p^{\sigma-\delta} f(p)}\right) \\
& \le M^{-\delta}
\exp\Big((1+o(1))\max_{p|n} p^{\delta}\sum_{p|n} \frac{1}{p^\sigma f(p)}\Big)
\end{align*}
whenever $\delta>0$.
We find that by the definition of $\mathcal{M}$,
\begin{align*} \sum_{p|n} \frac{1}{p^\sigma f(p)} &  =
\sum_{p|n} \frac{\sqrt{|\log(2\sigma-1)|}}{(\log N\log_2 N)^{1-\sigma}}p^{1-2\sigma}(\log p-\log_2N-\log_3N)\\
&\le \sum_{k=1}^{[(2\sigma-1)^{-\alpha}]}
\frac{a\log N}{k^2|\log(2\sigma-1)|}\frac{\sqrt{|\log(2\sigma-1)|}}{(\log N\log_2 N)^{1-\sigma}} \\
& \qquad \qquad \qquad \times (e^{k+1}\log N\log_2 N)^{1-2\sigma}(k+1) \\
& \le a\alpha(\sqrt{|\log(2\sigma-1)|}+3)\frac{(\log N)^{1-\sigma}}{(\log_2 N)^{\sigma}}. \end{align*}
We now set $\delta=1/\log_2 N$ and 
obtain
\[ \sum_{k|n,\,k\ge M}\frac{1}{f(k)\sqrt{k}} \le
\exp\left(\big(ae \alpha-e\big)(\sqrt{|\log(2\sigma-1)|}+3)\frac{(\log N)^{1-\sigma}}{(\log_2 N)^{\sigma}}\right)<1, \]
provided that $a\alpha<1$.
\end{proof}
We are now finally prepared to finish the proof of Theorem~\ref{part}.
\begin{proof}[Proof of Theorem~\ref{extreme}---part 2]
We recall that $N=[\sqrt{T}]$. This means that we need to prove that \eqref{toprove} holds for $M=T\ge N^2$ and suitable choices of the parameters $\alpha$ and $a$, ensuring the desired asymptotic behavior
\[ \nu(\sigma)=(1/\sqrt{2}+o(1))\sqrt{|\log(2\sigma-1)|} \]
when $\sigma\searrow 1/2$. We conclude by observing that this follows from the three preceding lemmas. \end{proof}

\section{Proof of Theorem~\ref{part}}

All the work needed for the proof of Theorem~\ref{part} has now been made. Indeed, we may use the construction in \cite[Section 3]{BS2} and  the estimates established there, corresponding to Lemma~\ref{lem1} and Lemma~\ref{lem2}. We only need a minor modification of Lemma~\ref{lem8}, which is as follows.

\begin{lem}
\label{lem9}
Let $f(n)$ and $\mathcal{M}$ be as defined above in the case $\sigma=1/2+1/\log_2 T$.  Then
\[
\frac{1}{\sum_{j\in\N}f(j)^2}\sum_{n\in\mathcal{M}}\frac{f(n)}{{\sqrt{n}}}\sum_{d|n,\,d\le n/M}f(d){\sqrt{d}}=o(A(N,1/2)),\quad N\to\infty,
\]
where
\[ M:=\exp\big(e (\sqrt{\log N \log_2 N\log_3 N}\big). \]
\end{lem}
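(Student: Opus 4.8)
The plan is to mimic the proof of Lemma~\ref{lem8} with $\sigma=1/2+1/\log_2 T$, tracking how the extra factor $p^{-\delta}$ in Rankin's trick interacts with the specific choice of $M$. As in the proof of Lemma~\ref{lem8}, I would first rewrite the double sum as
\[
\sum_{n\in\mathcal{M}}\frac{f(n)}{\sqrt n}\sum_{d\mid n,\,d\le n/M}f(d)\sqrt d=\sum_{n\in\mathcal{M}}f(n)^2\sum_{k\mid n,\,k\ge M}\frac{1}{f(k)\sqrt k},
\]
so that it suffices to show $\sum_{k\mid n,\,k\ge M}1/(f(k)\sqrt k)=o(A(N,1/2))$ for every $n\in\mathcal{M}$. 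Applying Rankin's trick with a parameter $\delta>0$ and using that $f$ is multiplicative and supported on square-free numbers with prime factors in $P$ gives
\[
\sum_{k\mid n,\,k\ge M}\frac{1}{f(k)\sqrt k}\le M^{-\delta}\prod_{p\mid n}\Bigl(1+\frac{1}{p^{1/2-\delta}f(p)}\Bigr)\le M^{-\delta}\exp\Bigl((1+o(1))\max_{p\mid n}p^{\delta}\sum_{p\mid n}\frac{1}{p^{1/2}f(p)}\Bigr).
\]

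Next I would bound $\sum_{p\mid n}1/(p^{1/2}f(p))$ using the definition of $f(p)$ and the structure of $\mathcal{M}$ from \cite[Section~3]{BS2}: since $f(p)$ has the form $(\log N\log_2 N)^{1/2}/(p^{1/2}(\log p-\log_2 N-\log_3 N))$ up to the normalizing constant, the summand becomes $(\log p-\log_2 N-\log_3 N)/(\log N\log_2 N)^{1/2}$ (times a constant), and membership of $n$ in $\mathcal{M}$ limits the number of prime factors of $n$ in each block $P_k$. Summing over the blocks $k=1,\dots,[(2\sigma-1)^{-\alpha}]$ — here $(2\sigma-1)^{-\alpha}=(\log_2 T)^{\alpha}$ when $\sigma=1/2+1/\log_2 T$ — and estimating each block as in the proof of Lemma~\ref{lem8}, I expect to obtain a bound of the shape
\[
\sum_{p\mid n}\frac{1}{p^{1/2}f(p)}\le a\alpha\bigl(\sqrt{\log_3 N}+O(1)\bigr)\sqrt{\frac{\log N}{\log_2 N}},
\]
which is the $\sigma=1/2$ analogue of the penultimate display in the proof of Lemma~\ref{lem8}, with $|\log(2\sigma-1)|$ replaced by $\log_3 N$ (up to constants). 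With $\delta=1/\log_2 N$ the factor $\max_{p\mid n}p^{\delta}$ is $1+o(1)$ because every prime in $P$ is at most $\log N\exp((\log_2 T)^{\alpha})\log_2 N$, so $p^{\delta}\le\exp((1+o(1))(\log_2 T)^{\alpha-1})\to1$ for $\alpha<1$; and $M^{-\delta}=\exp(-e\sqrt{\log_3 N}\,\sqrt{\log N/\log_2 N}\,)$ (note the displayed $M$ in the statement has a missing closing parenthesis and should read $M=\exp\bigl(e\sqrt{\log N\log_2 N\log_3 N}\bigr)$). Combining these, the whole expression is bounded by $\exp\bigl((ae\alpha-e+o(1))\sqrt{\log_3 N}\,\sqrt{\log N/\log_2 N}\bigr)$, which is $o(1)$, hence a fortiori $o(A(N,1/2))$, provided $a\alpha<1$; this is exactly the constraint $1<a<1/\alpha$ already available from the construction.

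The main obstacle I anticipate is purely bookkeeping: making sure the constants line up so that the final exponent has the right sign and that the threshold $M$ in the statement is precisely the one produced by the $\delta=1/\log_2 N$ choice. In particular one must check that $\sqrt{|\log(2\sigma-1)|}+3$ from Lemma~\ref{lem8}, evaluated at $\sigma=1/2+1/\log_2 T$, indeed degenerates to the constant absorbed into $e\sqrt{\log N\log_2 N\log_3 N}$ — i.e. that $(\log N\log_2 N)^{1-\sigma}=(\log N\log_2 N)^{1/2-1/\log_2 T}\sim\sqrt{\log N\log_2 N}$ and that the lost factor $\sqrt{|\log(2\sigma-1)|}=\sqrt{\log_3 T}\sim\sqrt{\log_3 N}$ is the source of the extra $\sqrt{\log_3 N}$ in the exponent of $M$. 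Once these matchings are verified, the estimate follows verbatim from the computation in the proof of Lemma~\ref{lem8} with $|\log(2\sigma-1)|$ systematically replaced by $\log_3 N$, and no new idea is needed beyond the Rankin's-trick argument already spelled out there.
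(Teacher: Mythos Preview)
Your proposal is correct and is exactly the approach the paper takes: the authors state that the proof of Lemma~\ref{lem9} is ``word for word the same as that of Lemma~\ref{lem8}'' and omit it, and your outline reproduces precisely that argument with the substitutions $\sigma=1/2+1/\log_2 T$, $|\log(2\sigma-1)|\sim\log_3 N$, and $\delta=1/\log_2 N$. The bookkeeping you flag (matching $(\log N\log_2 N)^{1-\sigma}\sim\sqrt{\log N\log_2 N}$ and absorbing the $\sqrt{|\log(2\sigma-1)|}+3$ factor into the $\sqrt{\log_3 N}$ in the exponent of $M$) is indeed all that remains, and it goes through under the standing constraint $a\alpha<1$.
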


The proof of Lemma~\ref{lem9} is word for word the same as that of Lemma~\ref{lem8} and is therefore omitted. The desired estimate for $\widetilde{\widetilde{M}}_2(R,T)$ (see Lemma~\ref{m2lemD}) is now obtained in exactly the same way as in the preceding case when $\sigma=1/2+1/\log_2 T$.

\section{Concluding remarks}

To obtain a more precise estimate in the range $\sigma_0\le \sigma \le 1-1/\log_2 T$ for a suitable $\sigma_0$, $1/2<\sigma_0<1$, we could combine the two constructions in the range in which the powers of $\log T$ in Lemma~\ref{m1lemma} and Lemma~\ref{m2lem} are insignificant, say when $\sigma_0\le \sigma \le 1-1/\sqrt{\log_2 T}$. This can be done as follows. Let $\ell$ and $x$ be two positive integers such that $N:=\ell^{\pi(x)}$ satisfies the inequality $N\le \sqrt{T}$; here $\pi(x)$ is as usual the number of primes $\le x$. We let again $\mathcal{M}$ be the set of divisors of the number $\prod_{p\le x} p^{\ell-1}$  and choose $f(n)$ to be the characteristic function of $\mathcal{M}$.  We observe that the only difference from Subsection~\ref{gal} is that we have replaced the condition $\max \mathcal{M}\le \sqrt{T}$ by the less severe requirement that $|\mathcal{M}|\le \sqrt{T}$. The computation is precisely as in Subsection~\ref{gal}, but we are now free to choose a larger $x$ and consequently  a smaller $\ell$. From Lemma~\ref{psum}, we see that this should be done so that we make
\[ \frac{x^{1-\sigma}}{(1-\sigma)\log x}  \]
as large as possible. A calculus argument shows that $\ell$ should be of order $1/(1-\sigma)$ and  consequently
$x$ of order $(1-\sigma) \log_3 T$. We see again the phenomenon that more and more primes are used when $\sigma$ decreases.

Our final remark is about what we might expect the true growth of $|\zeta(\sigma+it)|$ to be. Farmer, Gonek, and Hughes \cite{FGH} conjectured, appealing to random matrix theory, that
\[ \max_{1\le t \le T} |\zeta(1/2+it)|= \exp\left(\Big(\frac{1}{\sqrt{2}}+o(1)\Big)\sqrt{\log T \log_2 T}\right) \]
and in \cite[Remark 2]{La}, it is suggested that for example
\[  \max_{T/2\le t \le T}  |\zeta(1/2+1/\log_2 T+it)|
=\exp\left(\Big(c+o(1)\Big)\sqrt{\log T \log_2 T}\right) \]
for some $c<1/\sqrt{2}$. Hence the asymptotics of our estimates when $\sigma\searrow 1/2$ is an order of magnitude smaller than this prediction. On the other hand, it is expected that the true growth
of   $\max_{1\le t \le T} |\zeta(\sigma+it)|$ is of order $\exp((c+o(1))(\log T)^{1-\sigma}(\log_2 T)^{-\sigma})$ for some constant $c$ when $1/2<\sigma< 1$, and the asymptotics of Theorem~\ref{extreme} when $\sigma\nearrow 1$ is indeed consistent with what is predicted in \cite[Remark 2]{La}.

The predictions of \cite{FGH} and \cite{La} are however very far from the known upper bounds for the growth of $\zeta(1/2+i t)$. We refer to Bourgain's recent paper \cite{B} for the best result when $\sigma=1/2$:
\[ |\zeta(1/2+it)| \le C_{\varepsilon} |t|^{13/84+\varepsilon} \]
for every $\varepsilon>0$.

\subsubsection*{Acknowledgment} We are grateful to Maksym Radziwi{\l\l} for helpful remarks regarding  Lamzouri's paper \cite{La}.

\end{document}